\documentclass[a4paper,reqno,11pt]{amsart}
\usepackage{amsmath,amssymb,amsthm,mathtools,calc,verbatim,enumitem,tikz,url,mathrsfs,fullpage, ulem}
\usepackage[nocompress]{cite}
\usepackage{bbm}
\usepackage{stmaryrd}
\usepackage{textcomp}
\usepackage{setspace}
\usepackage{amssymb}
\usepackage{amsthm}
\usepackage{amsmath}
\usepackage{graphicx}
\usepackage{tikz}
\usetikzlibrary{calc}
\usetikzlibrary{arrows.meta, positioning}
\usepackage{mathrsfs}

\usepackage{algorithm}
\usepackage{algpseudocode}
\usepackage{float}

\usepackage{caption}
\usepackage{marvosym}
\usepackage{empheq}
\usepackage{latexsym}
\usepackage{fontenc}
\usepackage{color}
\usepackage{thmtools, thm-restate}

\newtheorem{theorem}{Theorem}[section]

\newtheorem{lemma}[theorem]{Lemma}
\newtheorem{corollary}[theorem]{Corollary}

\newtheorem{claim}[theorem]{Claim}
\newtheorem*{claim*}{Claim}

\theoremstyle{definition}
\newtheorem{definition}[theorem]{Definition}

\newtheorem{question}[theorem]{Question}
\newtheorem*{qu*}{Question}
\theoremstyle{remark}

\newcommand\cA{\mathcal{A}}
\newcommand\cB{\mathcal{B}}

\newcommand\cG{\mathcal{G}}
\newcommand\cH{\mathcal{H}}
\newcommand\cI{\mathcal{I}}

\newcommand\cR{\mathcal{R}}
\newcommand\cS{\mathcal{S}}

\renewcommand\Pr{\operatorname{\mathbb{P}}}

\renewcommand\leq{\leqslant}
\renewcommand\geq{\geqslant}
\renewcommand\le{\leqslant}
\renewcommand\ge{\geqslant}
\renewcommand\to{\rightarrow}

	\def\Prob{\mathbb{P}}

	\def\cA{\mathcal{A}}

	\def\cF{\mathcal{F}}

	\def\<{\langle }
	\def\>{\rangle }

\usepackage{soul}
\usepackage{hyperref}
\usepackage{cleveref}

\crefname{thm}{theorem}{theorems}
\crefname{lem}{lemma}{lemmas}
\crefname{obs}{observation}{observations}
\crefname{defi}{definition}{definitions}
\Crefname{thm}{Theorem}{Theorems}
\Crefname{prop}{Proposition}{Propositions}
\Crefname{obs}{Observation}{Observations}
\Crefname{claim}{Claim}{Claims}
\Crefname{defi}{Definition}{Definitions}
\Crefname{subsection}{Section}{Sections}
\Crefname{question}{Question}{Questions}

\crefformat{enumi}{#2item~#1#3}
\Crefformat{enumi}{#2Item~#1#3}
\crefmultiformat{enumi}
  {items~#2#1#3}
  { and~#2#1#3}
  {, #2#1#3}
  {, and~#2#1#3}
\crefrangeformat{enumi}
  {items~#3#1#4--#5#2#6}

\begin{document}

\normalem

\title{Ramsey properties for tilings in random graphs}
\author{Lucas Arag\~ao \and Xinbu Cheng \and Rafael Filipe \and Rafael Miyazaki\and \\Danni Peng \and Zhifei Yan}

\address{Departamento de Matemática Aplicada, Instituto de Matemática, Universidade Federal do Rio de Janeiro, Rio de Janeiro, 21941-909, Brasil}
\email{aragao@im.ufrj.br}

\address{IMPA, Estrada Dona Castorina 110, Jardim Bot\^anico, Rio de Janeiro, 22460-320, Brasil}
\email{\{rafael.santos, danni.peng, xinbu.cheng\}@impa.br}

\address{Department of Mathematics, Emory University, Atlanta, GA, 30322, USA}
\email{rafael.kazuhiro.miyazaki@emory.edu}

\address{ECOPRO, Institute for Basic Science, 55 Expo-ro, Yuseong-gu, Daejeon, 34126, Korea}\email{zhifeiyan@ibs.re.kr}

\thanks{During this work, Xinbu Cheng was supported by CAPES, Rafael Filipe and Danni Peng were supported by CNPq, and Zhifei Yan was supported by the Institute for Basic Science (IBS-R029-C4)}

\begin{abstract}
Let $mH$ be the graph formed by $m$ vertex-disjoint copies of a graph $H$. Let $G \to (H)_r$ denote that, in any $r$-colouring of the edges of $G$, there exists a monochromatic copy of $H$. In 1975, Burr, Erd\H{o}s, and Spencer showed that if $H$ is a graph on $k$ vertices whose independence number is $\alpha$, then $K_n \to (mH)_2$, where $m\sim n/(2k-\alpha)$, and that the $1/(2k-\alpha)$ factor is best possible. In the 1990s, R\"{o}dl and Ruci\'{n}ski proved that, for all but a few graphs~$H$, the threshold for the property $\mathbb{G}(n,p) \to (H)_r$ is $n^{-1/m_2(H)}$. In this paper, generalizing the result of Burr, Erd\H{o}s, and Spencer, we prove that $n^{-1/\max\{m_2(H),1\}}$ is the threshold for the property $\mathbb{G}(n,p) \to (mH)_2$, where $m\sim n/(2k-\alpha)$. This threshold matches the one found by R\"{o}dl and Ruci\'nski for most graphs $H$, extending their result in the case $r=2$.
\end{abstract}

\maketitle
\section{Introduction}

Ramsey theory is one of the central topics in combinatorics. A fundamental problem in this field is to determine the \emph{Ramsey number} of a given graph~$H$, denoted by $R(H)$. This is the smallest integer~$n$ such that every $2$-colouring of the edges of the complete graph~$K_n$ contains a monochromatic copy of~$H$. A classical theorem of Ramsey~\cite{Ramsey} asserts that $R(H)$ is finite for every graph~$H$, and in 1935 Erd\H{o}s and Szekeres~\cite{E-Sz} established the first explicit upper bound for Ramsey numbers. Later, in 1947, Erd\H{o}s~\cite{Ramsey.lower} gave an exponential lower bound for the case $H = K_k$, marking the first application of the probabilistic method. This lower bound has since only been improved by a constant factor~\cite{Spencer_Diagonal}. On the other hand, the upper bound in the case $H=K_k$ has seen several substantial improvements over the years~\cite{Thomason_Diagonal,Conlon_Diagonal,Sah_Diagonal}, although an exponential improvement was achieved only recently by Campos, Griffiths, Morris, and Sahasrabudhe~\cite{CGMS}. Together with Balister, Bollobás, Hurley, and Tiba~\cite{BBCGHMST}, they also obtained new upper bounds for multicolour Ramsey numbers. For further background on Ramsey theory, we refer the reader to the surveys~\cite{CFS, Morris2026}.

An interesting variant of the classical Ramsey problem concerns the study of Ramsey numbers for \emph{graph tilings}. Given a graph $H$, an $H$-\textit{tiling} is a collection of vertex-disjoint copies of $H$, and we write $mH$ to denote an $H$-tiling of size $m$. One of the earliest questions in this direction, posed by Erd\H{o}s~\cite{Erdos}, asked for estimates of $R(mK_t)$ for all $t \ge 3$. In 1975, Burr, Erd\H{o}s, and Spencer~\cite{BES} answered this question in a strong form by establishing the following theorem.

\begin{theorem}[Burr, Erd\H{o}s, and Spencer~\cite{BES}]
\label{thm:BESOriginal}
    Let $H$ be a graph on $k$ vertices that contains no isolated vertices and whose independence number is $\alpha$. There exist constants $c = c(H)$ and $m_0 \in \mathbb{N}$ such that
\[
R(mH) = (2k - \alpha)m + c
\]
for all $m \ge m_0$.
\end{theorem}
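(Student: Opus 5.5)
For the lower bound I would use the classical two-block colouring. Take $N=(2k-\alpha)m-2$ and split $V(K_N)=A\cup B$ with $|A|=km-1$ and $|B|=(k-\alpha)m-1$. Colour every edge inside $A$ red and every other edge blue.

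A red copy of $H$ has no isolated vertices, so all its vertices lie in $A$. Hence a red $mH$ would need $km$ vertices in $A$, which is impossible.

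In a blue copy of $H$, the vertices lying in $A$ span no blue edge, so they form an independent set of $H$. Thus each blue copy has at most $\alpha$ vertices in $A$ and at least $k-\alpha$ vertices in $B$. A blue $mH$ would then need $(k-\alpha)m$ vertices of $B$, again impossible. This gives $R(mH)\ge (2k-\alpha)m-1$.

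For the matching upper bound $R(mH)\le(2k-\alpha)m+C$, I would argue as follows. Fix a $2$-colouring of $K_N$ with $N=(2k-\alpha)m+C$, where $C$ is a large constant depending on $H$. Choose a family of vertex-disjoint monochromatic copies of $H$, say $r$ red and $b$ blue, so that the uncovered set $W$ has $|W|<R(H)$. This is possible by greedily applying the definition of $R(H)$ inside the uncovered set. If $r\ge m$ or $b\ge m$ we are done, so assume $r,b<m$. Since $k(r+b)\ge N-R(H)$, both $r$ and $b$ are then linear in $m$. The task is to convert mixed red/blue material into more copies of one colour, at an average cost of $2k-\alpha$ vertices per copy.

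The key local statement I would aim for is an exchange lemma. Take many disjoint pairs, each consisting of a red copy and a blue copy of $H$, and pass to a large subfamily on which all the colour patterns are the same. This uses pigeonhole over the boundedly many colourings of the $2k$ vertices of a pair and of the edges between two pairs, in the spirit of a Ramsey argument on pairs. I would then show one of two outcomes:
\begin{enumerate}
\item the red side can be augmented, so that roughly $k$ red vertices together with about $k-\alpha$ spare vertices from blue copies yield an extra red copy; or
\item symmetrically, the blue side can be augmented in the same way.
\end{enumerate}
The extremal construction suggests why this works. If the colouring does not resemble it, namely a red clique-like part of size about $km$ joined in blue to everything, then somewhere there is a red edge crossing between the blue structure and the red structure, or a blue edge inside the red structure. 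Such an edge lets a blue copy absorb $\alpha$ vertices from red copies, or lets red copies be rearranged. I would set this up as a linear optimisation over homogeneous pattern classes and show that the optimum always reaches $m$ copies of one colour once $N\ge (2k-\alpha)m+C$.

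I expect the main obstacle to be precisely this exchange or stability step. One must show that whenever neither colour has $m$ disjoint copies, the colouring is forced to look, up to $O(1)$ vertices, like the two-block construction with parts of sizes $km$ and $(k-\alpha)m$. Only in that rigid structure does one get the exact count $(2k-\alpha)m$. The constant $C$ absorbs the leftover set $W$, the vertices lost to pigeonholing, and the bounded number of exceptional copies that do not fit the homogeneous pattern. The condition $m\ge m_0$ is needed so that these homogeneous subfamilies are large enough for the Ramsey and pigeonhole arguments to apply.
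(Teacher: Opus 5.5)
Your lower-bound construction is correct. The proposal has two genuine gaps on the upper side.

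\textbf{First gap: the upper bound is not proved.} The ``exchange lemma'' is only described as a goal. The stability statement you reduce it to (any colouring with no monochromatic $mH$ is the two-block colouring up to $O(1)$ vertices) is unproven, and it is much stronger than what is needed. The homogenisation step also fails as stated. Fixing the colour pattern on the edges \emph{between} two red/blue pairs is a Ramsey problem for pairs with $2^{\Theta(k^2)}$ colours, which in general gives homogeneous subfamilies of size only logarithmic in $m$. You, however, need to gain linearly many copies.

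The missing idea is the $H$-tie of Burr, Erd\H{o}s and Spencer, which the paper sketches: a red and a blue copy of $H$ sharing exactly $\alpha$ vertices, so together they span only $2k-\alpha$ vertices. The argument then runs as follows.
\begin{itemize}
\item Take a maximal family of $t$ disjoint ties and let $W$ be the remaining $N-(2k-\alpha)t$ vertices.
\item Suppose $W$ contained both a blue $K_s$ and a red $K_s$; they share at most one vertex. For $s$ large, \Cref{thm:KST} gives a monochromatic $K_{k,k}$ between them.
\item If that $K_{k,k}$ is red, place a maximum independent set of a red $H$ on its side in the blue clique and the other $k-\alpha$ vertices on its side in the red clique. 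Then complete those $\alpha$ vertices to a blue $H$ inside the blue clique. The blue case is symmetric. Either way this produces a tie inside $W$, contradicting maximality.
\item So one colour, say blue, has no $K_s$ in $W$. Then every $R(k,s)$ vertices of $W$ contain a red $K_k$, and greedily $W$ has a red $H$-tiling of size at least $(|W|-R(k,s))/k$.
\item Since $k\le 2k-\alpha$, the ties together with this tiling give at least $(N-R(k,s))/(2k-\alpha)$ red copies.
\end{itemize}
This yields $R(mH)\le(2k-\alpha)m+R(k,s)$ by a simple count, with no stability analysis at all.

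\textbf{Second gap: the exact formula is not addressed.} Even the two-sided bound $(2k-\alpha)m-1\le R(mH)\le(2k-\alpha)m+C$ does not prove the statement. The statement asserts that $R(mH)-(2k-\alpha)m$ equals a \emph{single} constant $c$ for all $m\ge m_0$, and a bounded integer sequence can oscillate forever. Your remark that $C$ ``absorbs'' the error terms conflates an upper bound with an exact value.

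You need a monotonicity step, again via ties: for $m$ large, $R((m+1)H)\le R(mH)+2k-\alpha$. Consider a colouring of $K_N$ with $N=R(mH)+2k-\alpha$.
\begin{itemize}
\item If there is a tie, delete its $2k-\alpha$ vertices. The remaining $R(mH)$ vertices contain a monochromatic $mH$, and the tie contributes a disjoint copy of the same colour.
\item If there is no tie, the dichotomy above gives a monochromatic tiling of size at least $(N-R(k,s))/k\ge m+1$. This holds because $N\ge(2k-\alpha)(m+1)-1$ and $\alpha<k$.
\end{itemize}
Hence $R(mH)-(2k-\alpha)m$ is eventually non-increasing, integer-valued and at least $-1$, so it is eventually constant.
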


In their paper, Burr, Erd\H{o}s, and Spencer~\cite{BES} stated \Cref{thm:BESOriginal} in a more general setting, deriving bounds for $R(mH,nG)$ for arbitrary graphs $H$ and $G$ without isolated vertices. However, in the particular case of \Cref{thm:BESOriginal}, their method does not allow one to compute the constant $c$ explicitly in terms of $H$, nor does it provide bounds on $m_0$. A few years later, Burr~\cite{Burr} refined their results by determining the exact dependence of the constant $c$ on the graphs for the cases $R(G,mH)$ and $R(mK_r,mK_s)$. More recently, Buci\'{c} and Sudakov~\cite{BS} showed that $m_0 = 2^{O(k)}$ suffices, and determined the constant $c(H)$ in the case where $H$ is a clique.

The statement of \Cref{thm:BESOriginal} naturally leads to an inverse version of the Ramsey problem for graph tilings, which we formulate as follows.

\begin{question}\label{question1}
For $n \in \mathbb{N}$ and a given graph $H$, how large a monochromatic $H$-tiling is guaranteed in any $r$-colouring of the edges of $K_n$?
\end{question}

Given two graphs $H$ and $G$, we will write $Rt_r(H,G)$ for the largest integer $m$ such that in every $r$-colouring of the edges of $G$, there is a monochromatic $H$-tiling of size $m$ in~$G$, with all the copies of the same colour. When $r=2$, we simply write $Rt(H,G)$. Using this notation, we can restate the result of Burr, Erd\H{o}s, and Spencer~\cite{BES} for $R(nH)$ as follows.

\begin{theorem}[Burr, Erd\H{o}s, and Spencer~\cite{BES}]\label{thm:BES}
    Let $H$ be a graph on $k$ vertices that contains no isolated vertices and whose independence number is $\alpha$. There exist constants $n_0 \in \mathbb{N}$ and $c=c(H)$ such that
\begin{equation*}
Rt(H,K_n) = \left\lfloor{\frac{n}{2k - \alpha} -c}\right\rfloor    \end{equation*}
for all $n \ge n_0$.
\end{theorem}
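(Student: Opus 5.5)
The plan is to derive \Cref{thm:BES} from \Cref{thm:BESOriginal} by inverting the relation $R(mH)=(2k-\alpha)m+c$. Write $s=2k-\alpha$. The function $f(m)=Rt(H,K_n)$ is defined by
\[
Rt(H,K_n)\ge m \iff K_n\to(mH)_2 \iff n\ge R(mH).
\]
The first equivalence holds because a monochromatic $mH$ contains a monochromatic $m'H$ for every $m'\le m$. Hence $Rt(H,K_n)=\max\{m: R(mH)\le n\}$; this maximum exists because $R(mH)\ge m$.

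The main step is the inversion itself. Take the constants $c_0=c(H)$ and $m_0$ from \Cref{thm:BESOriginal}, so that $R(mH)=sm+c_0$ for all $m\ge m_0$. Let $n_0$ be large enough that $n_0\ge sm_0+c_0$ and $n_0\ge R(m_0H)$. Fix $n\ge n_0$. The set $\{m: R(mH)\le n\}$ is downward closed and contains $m_0$, so its maximum $M$ is at least $m_0$. We may then use the linear formula: for $m\ge m_0$, the condition $R(mH)\le n$ is equivalent to $sm+c_0\le n$, that is, $m\le (n-c_0)/s$. Therefore
\[
M=\left\lfloor \frac{n-c_0}{s}\right\rfloor=\left\lfloor \frac{n}{s}-\frac{c_0}{s}\right\rfloor,
\]
and the statement follows with $c=c_0/s$.

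One subtlety remains: downward closure needs $R(mH)$ to be nondecreasing in $m$. This holds because $(m+1)H$ contains $mH$, so any colouring avoiding a monochromatic $mH$ also avoids $(m+1)H$.

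I expect no real obstacle here, since the content lies entirely in \Cref{thm:BESOriginal}. The only point needing care is the choice of $n_0$, which must guarantee that the maximum falls in the range $m\ge m_0$ where the formula applies. This is handled by requiring $n_0\ge R(m_0H)$ as above.
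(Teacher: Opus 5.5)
Your proposal is correct and follows the paper's route. The paper presents this statement purely as a restatement of \Cref{thm:BESOriginal}, and your inversion $Rt(H,K_n)=\max\{m : R(mH)\le n\}=\lfloor (n-c_0)/(2k-\alpha)\rfloor$ for $n\ge n_0$, with $c=c_0/(2k-\alpha)$, fills in the details the paper leaves implicit: the monotonicity of $R(mH)$ in $m$, and the choice $n_0\ge R(m_0H)$, which puts the maximum in the range where the linear formula holds.
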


A variant of this problem was considered by Balogh, Freschi, and Treglown~\cite{BFT} who studied the setting in which the host graph is not complete. In particular, they studied the case where the host graph $G$ has linear minimum degree. They determined the exact value of $Rt(K_3, G)$ when $4n/5\leq\delta(G) \le {5n}/{6}$ and when $\delta(G) \ge {65n}/{66}$. However, the value of $Rt(K_3, G)$ remains unknown in the intermediate range of $\delta(G)$. For general graphs $H$, the question remains wide open.

Another natural setting arises when the host graph is the random graph. Gishboliner, Krivelevich, and Michaeli~\cite{GKM} determined the threshold for the property that, in any $r$-colouring of the edges of $\mathbb{G}(n,p)$, there exists an almost monochromatic path of size $2n/(r+1)-o(n)$, possibly excluding a constant number of edges. Their result (Theorem~1.5 in~\cite{GKM}) immediately yields the following random version of \Cref{thm:BES} for perfect matchings.

\begin{theorem}[Gishboliner, Krivelevich, and Michaeli~\cite{GKM}]\label{thm:MatchingLower}
Let $r \ge 2$ be a positive integer. For every $\varepsilon>0$, there exists $C=C(\varepsilon)>0$ such that the following holds. If $p\geq C/n$ and $G\sim \mathbb{G}(n,p)$, then
\begin{align*}
 Rt_r(K_2,G)\geq \frac{n}{r+1}-\varepsilon n  
\end{align*}
with high probability.
\end{theorem}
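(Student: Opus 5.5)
The plan is to derive \Cref{thm:MatchingLower} directly from the path result of Gishboliner, Krivelevich, and Michaeli (Theorem~1.5 in~\cite{GKM}), as the text indicates, rather than to reprove anything about $\mathbb{G}(n,p)$. That result says the following. For every $r\ge 2$ and every $\varepsilon'>0$ there exist $C=C(\varepsilon')$ and a constant $s=s(r)$ (independent of $n$) with this property: if $p\ge C/n$, then with high probability $G\sim\mathbb{G}(n,p)$ is such that every $r$-colouring of $E(G)$ admits a colour $i$ and a path $P\subseteq G$ on at least $\bigl(\tfrac{2}{r+1}-\varepsilon'\bigr)n$ vertices in which all but at most $s$ edges have colour $i$. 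I would apply this with $\varepsilon'=\varepsilon$ and take $C=C(\varepsilon)$. Since the conclusion holds for all colourings simultaneously on a single high-probability event, it suffices to work with one fixed good $G$ and an arbitrary $r$-colouring of it.

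Fix such a colouring and the corresponding path $P$ with colour $i$. Delete from $P$ the at most $s$ edges not of colour $i$. This splits $P$ into at most $s+1$ vertex-disjoint subpaths $P_1,\dots,P_t$, with $t\le s+1$, each of them monochromatic in colour $i$. Let $\ell_j$ be the number of vertices of $P_j$, so that $\sum_j \ell_j=|V(P)|$. A path on $\ell_j$ vertices contains a matching of size $\lfloor \ell_j/2\rfloor\ge (\ell_j-1)/2$, obtained by taking every other edge. Taking the union over $j$ gives a matching in colour $i$, that is, a monochromatic $K_2$-tiling, of size at least
\[
\sum_{j=1}^{t}\frac{\ell_j-1}{2}\;\ge\;\frac{|V(P)|-(s+1)}{2}\;\ge\;\frac{n}{r+1}-\frac{\varepsilon n}{2}-\frac{s+1}{2}.
\]
For $n$ large enough in terms of $\varepsilon$ and $r$, the constant $(s+1)/2$ is at most $\varepsilon n/2$. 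The bound is then at least $n/(r+1)-\varepsilon n$, which gives $Rt_r(K_2,G)\ge n/(r+1)-\varepsilon n$ with high probability.

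There is no genuine obstacle here; the only step needing care is reading off the exact form of Theorem~1.5 in~\cite{GKM}. One must check that the number of exceptional edges is bounded by a constant, or at least by $o(n)$, which would suffice equally well. One must also check that the path length is stated as $2n/(r+1)-o(n)$ vertices rather than edges; the two readings differ by one, which is irrelevant. If the exceptional edges were allowed to be $\delta n$ for a small $\delta$, the same splitting argument loses at most $(\delta n+1)/2$, so choosing $\delta\le\varepsilon$ still gives the claim.
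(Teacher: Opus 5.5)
Your proposal is correct and follows the paper's route. The paper gives no separate proof; it only remarks that Theorem~1.5 of \cite{GKM} (an almost monochromatic path of size $2n/(r+1)-o(n)$ with a constant number of off-colour edges) immediately yields the statement, and your argument is exactly that deduction: split the path into at most $s+1$ monochromatic subpaths, take alternate edges, and absorb the $(s+1)/2$ loss into $\varepsilon n/2$ for large $n$.
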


This result was later extended to random hypergraphs by Gishboliner, Glock, Michaeli, and Sgueglia \cite{GGMS}.

In this paper, we investigate \Cref{question1} in the case $r=2$ when the host graph is the random graph~$\mathbb{G}(n,p)$, for general graphs $H$.

\subsection{Ramsey theory in random graphs}Studying various Ramsey properties in random graphs $\mathbb{G}(n,p)$ has been one of the major and most dynamic directions in combinatorics over the past few decades.

A landmark result relating Ramsey theory and random graphs was established by R\"{o}dl and Ruci\'{n}ski~\cite{RR93, RR94, RR95} in a series of papers in the 1990s. They determined the threshold for the appearance of a monochromatic copy of a general graph~$H$ in every \mbox{$r$-colouring} of the edges of $\mathbb{G}(n,p)$. They proved that the threshold in this case is determined by the $m_2$-\textit{density} of $H$. Given a graph $H$ with at least two edges, the \emph{$m_2$-density} of $H$ is defined as
\[m_2(H) \coloneq\max\left\{\frac{e(F)-1}{v(F)-2} \colon F \subseteq H, \, v(F) \ge 3 \right\}.\]
For all other graphs $H$, we define $m_2(H)=1/2$. Formally, they proved the following result for random graphs.

\begin{theorem}[R\"{o}dl and Ruci\'{n}ski \cite{RR93, RR94, RR95}]\label{thm:RR}
Let $r\ge2$ be an integer and $H$ be a graph on $k$ vertices which is not a forest of stars or, in the case $r=2$, paths of length 3. Then $n^{-1/m_2(H)}$ is the threshold for the property that, in any $r$-colouring of the edges of $\mathbb{G}(n,p)$, there is a monochromatic copy of $H$.
\end{theorem}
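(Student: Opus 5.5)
The plan is to prove the two halves of the threshold separately. For the $1$-statement (large $p$) I would use the hypergraph container method, following the approach of Nenadov and Steger. For the $0$-statement (small $p$) I would construct an explicit colouring from a structural analysis of how copies of $H$ overlap in $\mathbb{G}(n,p)$. Throughout I may assume $H$ is strictly $2$-balanced: replace $H$ by a subgraph $F$ attaining $m_2(H)$ and minimal with this property. This is harmless for the $1$-statement, since forcing $H$ forces $F$ in the sense needed, and for the $0$-statement it suffices to avoid monochromatic $F$.

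\textbf{The $1$-statement.} Let $p \ge C n^{-1/m_2(H)}$. Consider the $e(H)$-uniform hypergraph on $E(K_n)$ whose edges are the copies of $H$. A balanced-supersaturation computation shows that its codegrees are controlled exactly by $m_2(H)$. The container lemma then gives the following. Every $H$-free graph $I$ on $[n]$ contains a fingerprint $T(I)\subseteq I$ with $|T(I)| \le c\, n^{2-1/m_2(H)}$, and $I$ lies in a container $C(T(I))$ that is determined by $T(I)$ alone and contains at most $\delta n^{k}$ copies of $H$.

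Now suppose some colouring of $G\sim\mathbb{G}(n,p)$ has no monochromatic $H$. Then each of the $r$ colour classes is $H$-free, so $G \subseteq C_1\cup\dots\cup C_r$, where $C_i = C(T_i)$ and $T_i \subseteq G$. Next I use Ramsey's theorem together with a supersaturation averaging over $R(H)$-subsets: every $r$-colouring of $K_n$ contains $\Omega(n^k)$ monochromatic copies of $H$. Applying this to $C_1\cup\dots\cup C_r$, with the missing edges given an extra dummy colour, shows that the complement $E(K_n)\setminus\bigcup_i C_i$ has at least $\varepsilon n^2$ edges once $\delta$ is small.

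The failure probability is then bounded by a union bound over tuples $(T_1,\dots,T_r)$. The tuple must lie in $G$, contributing a factor $p^{\sum|T_i|}$, and $G$ must avoid the $\varepsilon n^2$ complementary edges, contributing $(1-p)^{\varepsilon n^2}$. This gives at most
\[
\sum_{t\le rcn^{2-1/m_2}}\binom{n^2}{t}p^{t}\,e^{-\varepsilon p n^2}\le \exp\bigl(O(n^{2-1/m_2}\log C)-\varepsilon C n^{2-1/m_2}\bigr)=o(1)
\]
for $C$ large. Here $\binom{n^2}{t}p^t \le (e n^2p/t)^t$, and since $t \lesssim n^{2-1/m_2}$ each factor is bounded by $C^{O(1)}$. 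Verifying the codegree conditions for the container lemma at this precise density is routine but is the technical core of this half.

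\textbf{The $0$-statement.} Let $p\le c\,n^{-1/m_2(H)}$. I would show that with high probability $G$ can be coloured; since colourings with $r$ colours include those with $2$, it suffices to $2$-colour when possible. Edges in no copy of $H$ can be coloured arbitrarily. I would study the hypergraph $\mathcal{H}_G$ whose vertices are the edges of $G$ and whose hyperedges are the copies of $H$ in $G$, and analyse its components. A first-moment count over the possible unions of overlapping copies, using strict $2$-balancedness, shows that with high probability every component is either small, of bounded size, or built as a long chain or tree of copies glued along single edges or vertices. The small components are finitely many configurations, each of density at most $m_2(H)$. I would then need to show, by a case analysis, that each such bounded configuration admits a $2$-colouring with no monochromatic copy of $H$. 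The tree-like components are coloured greedily along the gluing order, which works as long as $H$ has at least two edges in a cycle or is not a star.

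I expect this last step to be the main obstacle. One has to show that every bounded ``dense'' configuration with density at most $m_2(H)$ is non-Ramsey for $H$, and this is exactly where the exceptions arise: star forests, and paths of length $3$ when $r=2$, fail, because there the threshold is governed by different densities. For this step I would first try the Nenadov–Steger style deterministic lemma: if a graph $F$ satisfies $F\to(H)_r$, then $F$ has a subgraph of density strictly greater than $m_2(H)$, apart from the listed exceptions. One then combines this lemma with the first-moment bound that $\mathbb{G}(n,p)$ contains no graph of bounded size with density above $m_2(H)$. The remaining work is handling the unbounded components via a bounded-size reduction.
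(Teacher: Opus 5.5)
The paper does not prove this theorem. It only cites R\"odl--Ruci\'nski and remarks that Nenadov and Steger gave a container proof of the $1$-statement; that is the framework the paper adapts in \Cref{lem:ABcontainer} and \Cref{thm:Htrans}. For $r=2$, \Cref{lem:Hsup} with $U=V(G)$ even recovers the $1$-statement. Your $1$-statement is essentially that Nenadov--Steger argument. You use containers for $H$-free graphs with one fingerprint per colour class, where the paper uses containers for coloured graphs on $E(K_n)\times\{b,r\}$, and the argument is sound for every $r$.

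There is one slip. Your reduction to a strictly $2$-balanced $F\subseteq H$ runs the wrong way for the $1$-statement, since $G\to(F)_r$ does not imply $G\to(H)_r$. Simply don't reduce there. The codegree bound $\Delta_j\le e(H)!\,\tau^{j-1}n^{k-2}$ only uses that $m_2(H)$ is a maximum over all subgraphs, exactly as in \Cref{lem:degreecondition}.

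The $0$-statement, however, has genuine gaps.

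First, your reduction breaks forests. If $H$ is a forest that is not a star forest, then $m_2(H)=1$ and your minimal $F$ is the two-edge path $K_{1,2}$. With high probability $\mathbb{G}(n,c/n)$ has a vertex of degree larger than $r$, so $G\to(K_{1,2})_r$ and the reduced statement is false. Forests need their own argument on the tree and unicyclic components of $\mathbb{G}(n,c/n)$. Odd cycles there are exactly the source of the path-of-length-$3$ exception. For example, $C_5$ with a pendant edge at each vertex forces a monochromatic path of length $3$ in every $2$-colouring, and it is present with probability bounded away from $0$. So for $r\ge 3$ and such $H$, your plan to ``$2$-colour when possible'' cannot give a with-high-probability statement; a third colour must really be used.

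Second, your structural dichotomy is false as stated. Take $H=K_3$ and $p=cn^{-1/2}$, and let $W_t$ be the wheel consisting of a $t$-cycle plus a vertex joined to all of it. The expected number of copies of $W_t$ is about $c^{2t}n/(2t)$, which still tends to infinity for $t$ up to a small constant times $\log n$. So the copy-hypergraph typically has components that are long \emph{cycles} of triangles, neither bounded nor tree-like. The workable formulation is to peel off, one at a time, edges lying in at most one copy of $H$, colouring them afterwards in reverse order. You then prove that with high probability the remaining core splits into pieces of bounded size, to which your first-moment bound on dense bounded graphs applies. That probabilistic lemma requires a careful count over all ways copies can overlap along sequences of length up to order $\log n$, not a single first-moment computation.

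Third, and most importantly, the deterministic lemma is the heart of the R\"odl--Ruci\'nski $0$-statement, and you only say you would try it. This is the claim that every graph $F$ with $m(F)\le m_2(H)$ satisfies $F\nrightarrow(H)_r$ outside the exceptional families. It is not routine. The natural degeneracy argument is: remove a vertex of minimum degree (at most $\lfloor 2m_2(H)\rfloor$), colour the rest inductively, and split its edges so that each of the two colours gets fewer than $\delta(H)$ of them. This needs $\lfloor 2m_2(H)\rfloor\le 2\delta(H)-2$, which already fails for $K_3$, $K_4$ and $C_4$. So a genuinely different case analysis is required. As it stands, your $0$-statement is an outline of the known strategy with its difficult steps missing.
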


In this paper, we extend the classical result of R\"{o}dl and Ruci\'{n}ski in the two-colour case. More precisely, we show that, for a large class of graphs~$H$, the threshold for the appearance of a monochromatic copy of $H$ in every $2$-colouring of the edges of $\mathbb{G}(n,p)$ coincides with the threshold for the existence of a monochromatic $H$-tiling of asymptotically maximum possible size in every such colouring. The following theorem is our main result and provides such a statement.

\begin{theorem}\label{thm:Hlower}
Let $H$ be a graph on $k$ vertices whose independence number is $\alpha$. For every $\varepsilon>0$, there exists $C=C(H,\varepsilon)>0$ such that the following holds. If $p\geq Cn^{-1/\max\{m_2(H),1
\}}$ and $G\sim \mathbb{G}(n,p)$, then
\begin{align*}
 Rt(H,G)\geq \frac{n}{2k-\alpha}-\varepsilon n  
\end{align*}
with high probability.
\end{theorem}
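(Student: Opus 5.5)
We will use the sparse regularity method, combined with the deterministic Burr--Erd\H{o}s--Spencer argument run on a reduced graph. Fix $\varepsilon>0$ and a $2$-colouring of $G\sim\mathbb{G}(n,p)$ with $p\ge Cn^{-1/\max\{m_2(H),1\}}$. Since $p\gg \log n/n$ fails only in the degenerate case $m_2(H)\le 1$ (forests), we treat that case separately at the end. We apply the multicolour sparse regularity lemma to the pair of colour classes $(G_R,G_B)$, with parameter $\eta\ll\varepsilon$. This yields a partition $V_1,\dots,V_t$ with $t\le T(\eta)$. We then form the reduced $2$-edge-coloured multigraph $\Gamma$ on $[t]$, where $ij$ receives colour red (resp.\ blue) if $(V_i,V_j)$ is $(\eta,p)$-regular in that colour with relative density at least $d$. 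Because $G$ is w.h.p.\ upper-uniform, and almost every pair in $G$ has density $\approx p$, every pair $ij$ outside an $\eta$-fraction receives at least one colour. Hence $\Gamma$ is a $2$-coloured graph on $t$ vertices with at least $(1-\eta)\binom t2$ edges. In particular it is \emph{almost complete}.

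The combinatorial heart is a fractional (or ``weighted'') version of Theorem~\ref{thm:BES} on the reduced graph. We want a monochromatic, say red, collection of \emph{fractional $H$-blowups} in $\Gamma$: nonnegative weights $w(\phi)$ on homomorphisms $\phi\colon H\to\Gamma_{\mathrm{red}}$, where each vertex $x$ of $H$ is weighted by the proportion of the cluster it uses. The constraint is that each cluster $i$ has total load at most $1$, and the total mass should be at least $(1/(2k-\alpha)-\varepsilon/2)t$ copies. To obtain this, we mimic the Burr--Erd\H{o}s--Spencer proof. If the red graph contains a large matching-like structure we tile greedily. Otherwise, we use the extremal structure: the lower bound construction is a red clique on $k-\alpha$ ``extra'' parts together with a blue clique. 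The upper-bound proof shows that either colour contains disjoint copies of the graph $H^+$ obtained by joining cliques. Concretely, we plan to first find a large monochromatic structure of the form $K_{(k-\alpha)m+\cdots}$ plus bipartite blowups, by applying Ramsey/Tur\'an-type statements to the almost-complete $\Gamma$. Because $\Gamma$ is only almost complete, we need a robust version of Theorem~\ref{thm:BES}. We obtain it by passing to $\Gamma$ with the $\eta$-fraction of bad vertices deleted and all missing edges added in a fixed colour, then discarding copies that use added edges. There are few of these, since each missing edge lies in few copies in an optimal tiling, and this costs only $O(\sqrt\eta)t$. Rather than invoke the theorem as a black box, it is cleaner to apply Theorem~\ref{thm:BES} to the blow-up $\Gamma(s)$ with each cluster replaced by $s$ vertices. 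Any monochromatic $H$-tiling there of size $\ge(1/(2k-\alpha)-\varepsilon/4)st$ translates into a fractional tiling of $\Gamma$ with the required loads. We expect this translation, which must control the missing edges and the vertices mapped into the same cluster, to be the main obstacle.

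Next we embed. Each fractional copy with weight $w$ using a homomorphism $\phi$ asks us to find roughly $wn/t$ vertex-disjoint copies of $H$ in the red regular pairs $(V_{\phi(x)},V_{\phi(y)})$. This is done greedily using the sparse counting/embedding lemma (the K{\L}R conjecture, proved by Balogh--Morris--Samotij, Saxton--Thomason, and Conlon--Gowers--Morris--Samotij). That result guarantees, for $p\ge Cn^{-1/m_2(H)}$, that every subgraph of $G$ with regular pairs of density $d$ among subsets of size $\ge\varepsilon' n/t$ contains a copy of $H$. We repeatedly embed a copy and delete it. While at least $\varepsilon'|V_i|$ vertices remain in each relevant part, regularity is inherited, so we lose only $O(\varepsilon' n)$ vertices in total. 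Summing over the fractional copies gives $n/(2k-\alpha)-\varepsilon n$ monochromatic copies. Choosing $\eta,d,\varepsilon'\ll\varepsilon$ and $C$ large completes the proof when $m_2(H)\ge1$.

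Finally, if $m_2(H)<1$, then $H$ is a forest and $p\ge C/n$. In this regime regularity is too weak, so we follow the approach of Theorem~\ref{thm:MatchingLower}. Using the Gishboliner--Krivelevich--Michaeli connected-matching/long-path technique, we find a large monochromatic connected component containing an almost spanning structure. Since any forest $H$ embeds in long paths and bounded-degree trees, which can be found in expanding sparse monochromatic subgraphs (via Friedman--Pippenger-type embedding), the tiling bound again reduces to the same fractional problem on a reduced graph defined by monochromatic connected matchings.
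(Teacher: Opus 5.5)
\textbf{The gap is the robust Burr--Erd\H{o}s--Spencer step, which your derivation from Theorem~\ref{thm:BES} does not establish.} The rest of your reduction is standard and would go through: sparse regularity for the two colour classes, then K{\L}R to embed about $|V_i|$ disjoint copies of $H$ along each monochromatic copy of $H$ in the reduced graph $\Gamma$. An ordinary monochromatic $H$-tiling of $\Gamma$ with $(1/(2k-\alpha)-\varepsilon/2)t$ copies already suffices, so fractional tilings are not needed.

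What you need is a \emph{robust} version of the theorem: every $2$-colouring of a $t$-vertex graph with at least $(1-\eta)\binom{t}{2}$ edges contains a monochromatic $H$-tiling with $(1/(2k-\alpha)-o_\eta(1))t$ copies. Your routes to it fail as follows.
\begin{itemize}
\item \emph{Adding the missing edges in a fixed colour and applying Theorem~\ref{thm:BES}.} Nothing controls how many copies of the resulting tiling use added edges. Each edge lies in at most one copy of a tiling, so the only bound is the number of added edges, up to $\eta t^2$. Since $t$ is in general far larger than $1/\eta$, this exceeds the $\Theta(t)$ copies in the tiling.
\item \emph{Deleting vertices of large missing degree.} This does not help: the irregular pairs can be spread so that every cluster lies in about $2\eta t$ of them, and a black-box tiling may use one in every copy.
\item \emph{Passing to the blow-up $\Gamma(s)$.} It has the same defect, since it has no edges inside blown-up clusters.
\item \emph{Applying Theorem~\ref{thm:BES} to many small complete pieces.} This also fails, because the winning colour can change from piece to piece. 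Producing a single global colour is exactly the content of the Burr--Erd\H{o}s--Spencer argument, so it cannot be recovered by patching.
\end{itemize}
Your sketch of ``mimicking'' their proof (matching-like structures, an extremal construction with $k-\alpha$ extra parts) does not describe a workable argument either.

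\textbf{The missing idea is a mechanism that balances the two colours globally in a non-complete host.} In the paper this is the $(H,\eta)$-cluster. Lemma~\ref{lem:generalHtie} extracts a cluster from a blue tiling and a red tiling on disjoint $s$-sets whenever the host is $(H,\eta^2 s)$-rich. A maximal family of disjoint clusters, together with greedily chosen monochromatic copies in the leftover vertices, then gives the bound.

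You could repair your proof by running exactly this on $\Gamma$:
\begin{itemize}
\item delete the clusters of missing degree above $\sqrt{\eta}\,t$;
\item apply Lemma~\ref{lem:ABsuper} to $\Gamma[X\cup Y]$ to show that $\Gamma$ is $(H,\eta' t)$-rich when your regularity parameter $\eta$ is small compared with $\eta'^2$;
\item run the cluster argument to obtain the robust tiling.
\end{itemize}
Once you have this, however, the regularity and K{\L}R layer is superfluous. The paper transfers richness directly to $\mathbb{G}(n,p)$ via hypergraph containers (Theorem~\ref{thm:Htrans}) and runs the cluster argument in $G$ itself.

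\textbf{Your final paragraph is unnecessary and partly wrong.} $m_2(H)<1$ means $H$ is a matching, possibly with isolated vertices. That case follows from Theorem~\ref{thm:MatchingLower}, or from your main argument, since regularity still works for $p\ge C/n$ with $C$ large and embedding a matching is trivial. Your claim that every forest embeds in long paths is false for stars.
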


We remark that $m_2(H) <1$ if and only if $H$ is a matching. In this case, \Cref{thm:Hlower} recovers the case $r=2$ of the result by Gishboliner, Krivelevich, and Michaeli \cite{GKM}, stated here as \Cref{thm:MatchingLower}. Combining \Cref{thm:Hlower} and \Cref{thm:BES} yields the following probabilistic generalization of \Cref{thm:BES}.

\begin{corollary}\label{cor:Hlower1}
    Let $H$ be a graph on $k$ vertices that contains no isolated vertices and whose independence number is $\alpha$. If $p\gg n^{-1/\max\{m_2(H),1\}}$ and $G\sim \mathbb{G}(n,p)$, then
\begin{align*}
 Rt(H,G) = \frac{n}{2k-\alpha}-o(n)
\end{align*}
with high probability. 
\end{corollary}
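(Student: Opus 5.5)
The corollary follows by sandwiching $Rt(H,G)$ between the lower bound of \Cref{thm:Hlower} and the deterministic value from \Cref{thm:BES}. Only the elementary monotonicity of $Rt(H,\cdot)$ under taking subgraphs is needed to connect the two.

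\emph{Upper bound (deterministic).} Let $G$ be any spanning subgraph of $K_n$; in particular this holds for every outcome of $\mathbb{G}(n,p)$. We claim $Rt(H,G)\le Rt(H,K_n)$. Set $m=Rt(H,K_n)$. By definition there is a $2$-colouring $\chi$ of $E(K_n)$ with no monochromatic $H$-tiling of size $m+1$. Restrict $\chi$ to $E(G)$. Any monochromatic $H$-tiling of size $m+1$ in $G$ under this colouring is also a monochromatic $H$-tiling of size $m+1$ in $K_n$ under $\chi$, which cannot exist. Hence $Rt(H,G)\le m$.

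Since $H$ has no isolated vertices, \Cref{thm:BES} applies: for $n\ge n_0$,
\[
Rt(H,G)\le Rt(H,K_n)=\left\lfloor \frac{n}{2k-\alpha}-c\right\rfloor \le \frac{n}{2k-\alpha}+|c|,
\]
where $c=c(H)$ is a constant. This is $\frac{n}{2k-\alpha}+o(n)$, and it holds deterministically, hence certainly with high probability.

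\emph{Lower bound.} Fix $\varepsilon>0$ and let $C=C(H,\varepsilon)$ be the constant from \Cref{thm:Hlower}. Since $p\gg n^{-1/\max\{m_2(H),1\}}$, we have $p\ge Cn^{-1/\max\{m_2(H),1\}}$ for all sufficiently large $n$. So \Cref{thm:Hlower} gives $Rt(H,G)\ge \frac{n}{2k-\alpha}-\varepsilon n$ with high probability. As $\varepsilon>0$ is arbitrary, a standard diagonal argument produces $\varepsilon=\varepsilon(n)\to0$ slowly enough that this bound still holds with high probability. Explicitly, for each $j$ choose $N_j$ so that for $n\ge N_j$ the event holds with $\varepsilon=1/j$ with probability at least $1-1/j$, and set $\varepsilon(n)=1/j$ for $N_j\le n<N_{j+1}$. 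This yields $Rt(H,G)\ge\frac{n}{2k-\alpha}-o(n)$ with high probability.

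Combining the two bounds gives $Rt(H,G)=\frac{n}{2k-\alpha}-o(n)$ with high probability. There is no real obstacle here: all the difficulty lies in \Cref{thm:Hlower}. The only points needing care are that the hypothesis on isolated vertices is used only to invoke \Cref{thm:BES} for the upper bound, and the quantifier change from "for every $\varepsilon$" to $o(n)$ handled above.
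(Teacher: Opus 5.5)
Your proposal is correct and follows the paper's route. The paper obtains the corollary by combining \Cref{thm:Hlower} for the lower bound with \Cref{thm:BES}, via the trivial monotonicity $Rt(H,G)\le Rt(H,K_n)$, for the upper bound; your proofs of that monotonicity and of the passage from a fixed $\varepsilon$ to $o(n)$ fill in details the paper leaves implicit.
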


We also remark that \Cref{thm:Hlower} extends \Cref{thm:RR} in the case $r=2$ as follows. For all graphs~$H$ which are not a forest of stars and paths of length $3$ and $G \sim \mathbb{G}(n,p)$, if  $p \ll n^{-1/\max\{m_2(H),1\}}$, then there exists a $2$-colouring of $G$ with no monochromatic copies of $H$. On the other hand, if $p \gg n^{-1/\max\{m_2(H),1\}}$, then every $2$-colouring of $G$ yields not only a single monochromatic copy of $H$, but also a monochromatic $H$-tiling of size $n/(2k-\alpha)-o(n)$, which is asymptotically optimal. Indeed, for these graphs $H$, we can simply combine \Cref{thm:RR,thm:Hlower}.

If $H$ is a forest of stars or paths of length $3$, then $m_2(H) \le 1$, and \Cref{thm:Hlower} yields the same $1$-statement above. However, although we cannot guarantee that $G$ does not contain a monochromatic copy of $H$ for $p=o(n^{-1})$, we guarantee that $G$ does not contain an $H$-tiling of size $n/(2k-\alpha)-o(n)$. Indeed, observe that selecting one edge from each copy of $H$ in an $H$-tiling produces a matching in $G$. Hence, if $p = o(n^{-1})$, Markov's inequality implies that with high probability $\mathbb{G}(n,p)$ does not contain a matching of size $\Omega(n)$ and thus contains no $H$-tilings (monochromatic or otherwise) of size $n/(2k-\alpha)-o(n)$.

The proof of \Cref{thm:Hlower} is inspired by the approach of Burr, Erd\H{o}s, and Spencer~\cite{BES}. To obtain a large collection of monochromatic vertex-disjoint copies of $H$ in any $2$-colouring of the edges of $\mathbb{G}(n,p)$, we search for structures that contain many monochromatic copies of $H$ while covering relatively few vertices. The main difficulty appears when $H$ is not a clique, since in this case the \textit{bow tie} like structures considered by Burr, Erd\H{o}s, and Spencer~\cite{BES} are hard to find in a $2$-colouring of the edges of $\mathbb{G}(n,p)$. To address this difficulty, for a graph $H$ and a parameter $\eta$, we introduce a new structure that we call $(H, \eta)$-\textit{clusters}, see \Cref{def:clusters}. Considering these structures is the key new idea in the proof for general $H$. Our argument relies on the method of hypergraph containers, which allows us to establish a structural pseudorandom property satisfied by all vertex subsets of size $\varepsilon n$ in $\mathbb{G}(n,p)$, under any $2$-colouring of its edges. This property, in turn, enables us to identify such $(H, \eta)$-clusters (see \Cref{subsec:2.3}).

\subsection{The method of hypergraph containers} The hypergraph container method, developed by Balogh, Morris, and Samotij \cite{BMS} and independently by Saxton and Thomason \cite{ST}, is a powerful tool for controlling the probability that a random set avoids some forbidden substructure. In general, the basic container lemma asserts that the sets avoiding these substructures are ``clustered", in the sense that they are contained in a small collection of vertex subsets $\mathcal{C}$ (the containers), and moreover each $C\in\mathcal{C}$ spans only a few copies of the forbidden structures.

The container method has proven highly effective in a wide range of problems in Ramsey theory for random graphs. Of particular relevance to this work, it was used by Nenadov and Steger~\cite{NS} to provide a concise proof of the 1-statement in \Cref{thm:RR}, and our argument closely follows their framework. We first establish a container lemma, \Cref{lem:ABcontainer}, and then apply it to prove \Cref{thm:Htrans}, showing that $\mathbb{G}(n,p)$ satisfies a certain property with high probability.

In other applications, Schacht and Schulenburg~\cite{SS} established the sharpness of the threshold for Ramsey properties in certain classes of graphs, while R\"{o}dl, Ruci\'{n}ski, and Schacht~\cite{RRS} derived a quantitative version of~\Cref{thm:RR}, and Mousset, Nenadov, and Samotij~\cite{MNS} obtained an upper bound on the threshold function for asymmetric Ramsey numbers. This last result establishes the $1$-statement of a well-known conjecture of Kohayakawa and Kreuter, as formulated in~\cite{KSS}. The $0$-statement of this conjecture was recently proved by Christoph, Martinsson, Steiner, and Wigderson~\cite{CMSW}. 

More recently, Friedgut, Kuperwasser, Samotij, and Schacht~\cite{FKSS} developed a unified framework for establishing sharp threshold results for a broad class of Ramsey properties, while Alvarado, Kohayakawa, Morris, and Mota~\cite{AKMM,AKMM2} investigated canonical Ramsey properties of random graphs. Finally, in a recent breakthrough, Aragão, Campos, Dahia, Filipe, and Marciano~\cite{ACDFM} developed a new container theorem yielding an exponential upper bound on induced Ramsey numbers, thereby resolving a conjecture of Erd\H{o}s from 1975.

\subsection{Organization of the paper and notation} 

In what follows, we omit floor and ceiling symbols and assume divisibilities whenever the case distinction does not affect the argument.

The remainder of this paper is organized as follows. In \Cref{sec:2}, we outline the main ideas of the proof, beginning with the simplest case of monochromatic triangle tilings in $\mathbb{G}(n,p)$, which serves as a warm-up and illustrates the key techniques used throughout. We then provide an overview of the general case and prove \Cref{thm:Hlower}, assuming \Cref{thm:Htrans}, a property of $\mathbb{G}(n,p)$ in the relevant regime, and \Cref{lem:generalHtie}, which forms the core of our proof. We also prove \Cref{thm:Htrans} under the assumption of \Cref{lem:ABcontainer}, our container lemma.

In \Cref{sec:3}, we prove \Cref{lem:generalHtie}. In \Cref{sec:4}, we prove \Cref{lem:ABsuper}, the supersaturation result required for our application of the container method. In \Cref{sec:5}, we combine \Cref{lem:ABsuper} with the hypergraph container method to prove \Cref{lem:ABcontainer}.

\section{Outline of the proof}\label{sec:2}

In this section, we outline the proof of \Cref{thm:Hlower}, introducing the main definitions and results. We first consider the case of monochromatic $K_3$-tilings in an arbitrary $2$-colouring of edges of a random graph, which serves to illustrate the general framework of the proof. Subsequently, we explain how to adapt the ideas from this particular setting to prove the general case. Finally, we describe the role played by the hypergraph container method in the proof of \Cref{thm:Hlower}.

\subsection{$K_3$-tilings in random graphs} We begin studying the case of monochromatic $K_3$-tilings under an arbitrary $2$-colouring of the edges of a random graph. Precisely, we prove the following theorem, which corresponds to \Cref{thm:Hlower} for $H = K_3$.

\begin{theorem}\label{thm:trianglelower}
For every $\varepsilon>0$, there exists $C>0$ such that the following holds. If $p \geq Cn^{-1/2}$ and $G\sim \mathbb{G}(n,p)$, then
\begin{equation*}
    Rt(K_3,G)\geq \frac{n}{5}-\varepsilon n
\end{equation*}
with high probability.
\end{theorem}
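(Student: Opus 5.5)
We follow the Burr--Erd\H{o}s--Spencer greedy strategy. A monochromatic ``bow tie'' $B$ consists of two triangles of the same colour sharing one vertex, so it spans $5$ vertices. If the colouring contains $n/5-\varepsilon n$ vertex-disjoint monochromatic bow ties, then by pigeonhole at least half of them share a colour. Moreover, every bow tie contains monochromatic triangles in either of its two halves. The plan is to use these bow ties as ``absorbing'' units: we greedily build a large family of disjoint monochromatic triangles and bow ties, and then count how they can be exchanged between colours. The target count is $n/5$ because $2k-\alpha=5$ for $K_3$.

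\textbf{Step 1: a pseudorandom property of $\mathbb{G}(n,p)$.} We first show that, with high probability, for $p\ge Cn^{-1/2}$ and every $2$-colouring of $G$, every vertex set $U$ with $|U|\ge \varepsilon n$ satisfies the following. Either $G[U]$ contains a monochromatic triangle in each colour class that is ``dense'' on $U$, or, more usefully, $U$ contains a red triangle and a blue triangle, and these are linked by a vertex (a monochromatic bow-tie-like configuration). In the form we actually need: every $U$ of size $\varepsilon n$ contains a monochromatic $K_3$, and every pair of disjoint sets $A,B$ of size $\varepsilon n$ contains monochromatic triangles touching both. This is the $K_3$ instance of the later \Cref{thm:Htrans}. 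We prove it via containers, following Nenadov--Steger. The key input is a supersaturation statement: any $2$-colouring of $K_n[U]$ with $|U|=\varepsilon n$ has $\Omega(n^3)$ monochromatic triangles (Goodman), and in the bipartite $A,B$ setting a similar count holds. A union bound over containers then gives failure probability $\exp(-\Omega(pn^2))$ against at most $\exp(O(n^{3/2}\log n/\dots))$ containers. This balances precisely at $p\approx n^{-1/2}=n^{-1/m_2(K_3)}$.

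\textbf{Step 2: greedy extraction and the counting argument.} Fix a colouring. We greedily remove vertex-disjoint monochromatic triangles until fewer than $\varepsilon n$ vertices remain, which Step 1 allows. This gives about $n/3$ triangles, say $R$ red and $B$ blue, with $R\ge B$. If $R\ge n/5$ we are done. Otherwise many triangles are blue and many are red, and we want to show that bow ties across the two colour families allow recolouring. Concretely, let $\mathcal{R},\mathcal{B}$ be the red and blue families, and consider the vertex sets $V(\mathcal{R})$ and $V(\mathcal{B})$. We repeatedly find, inside the union of red-triangle vertices and unused vertices, a red triangle using one vertex from a blue triangle's vertex set, or we find structures which give an extra red triangle per five vertices. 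The BES optimisation is as follows. Suppose the final structure consists of $x$ disjoint monochromatic red triangles together with a set $W$ on which no more red triangles exist, and each vertex of $W$ sits in blue structure. Mirroring the $K_n$ argument, every set of $\varepsilon n$ vertices avoiding red triangles must, by Step 1 applied with colours swapped, be rich in blue triangles. Optimising gives $\max(\#\text{red},\#\text{blue})\ge n/5-\varepsilon n$, since $3x+2\cdot(\text{leftover})$ accounting forces either $n/5$ red or $n/5$ blue copies.

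\textbf{Main obstacle.} In $K_n$ the bow-tie swapping uses that any two vertices are adjacent. In $\mathbb{G}(n,p)$ we can only use the linear-size statements of Step 1. The main difficulty is therefore to phrase the BES counting purely in terms of sets of size $\varepsilon n$. To do so, we would maintain a family of disjoint red bow ties and blue bow ties plus leftover triangles, and argue that if the leftover red-triangle vertex set $X$ and blue-triangle vertex set $Y$ both have size at least $\varepsilon n$, then Step 1 applied to $(X,Y)$ yields a monochromatic triangle meeting both. Such a triangle merges with existing triangles into a bow tie, increasing a potential function. Terminating the procedure leaves $n-5b-3t\le O(\varepsilon n)$ uncovered vertices, with $t$ triangles mostly of one colour, which gives the $n/5$ bound. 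Verifying the potential argument, and checking that Step 1's container supersaturation holds in this bipartite form, is the step we expect to require the most care.
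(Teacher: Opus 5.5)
\textbf{There is a genuine gap in Step 2, and it starts with your definition of a bow tie.} Two triangles of the \emph{same} colour sharing a vertex cannot both belong to a vertex-disjoint tiling. Such a configuration spends $5$ vertices on a single triangle, so your pigeonhole step yields only about $n/10$ triangles. The Burr--Erd\H{o}s--Spencer bow tie is instead a red triangle and a blue triangle sharing exactly one vertex. Then $t$ disjoint bow ties give $t$ disjoint triangles in \emph{each} colour, so they can be added to whichever colour dominates elsewhere. With monochromatic red and blue bow ties, your final accounting only guarantees $\max\{b,t\}$ subject to $5b+3t\approx n$, and this can be as small as $n/8$. Your own merging step (a red triangle meeting a blue one) actually produces the two-coloured bow tie, so your construction and your definition are inconsistent. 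The ``$3x+2\cdot(\text{leftover})$'' and ``recolouring'' discussion is not an argument.

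The missing argument is short:
\begin{enumerate}
\item Fix a \emph{maximal} family of $t$ disjoint two-coloured bow ties. If $t\ge n/5-\varepsilon n$ you are done.
\item Otherwise, greedily extract disjoint monochromatic triangles outside the bow ties until fewer than $\varepsilon n$ vertices remain.
\item Suppose at least $4\varepsilon n/3$ of these triangles are red and at least $4\varepsilon n/3$ are blue. Apply richness to their vertex sets. This gives a red triangle meeting the blue triangles, or a blue triangle meeting the red ones. It lies inside the extracted triangles, so it avoids the bow ties. It shares exactly one vertex with a triangle of the other colour, since two shared vertices would force an edge of both colours. This creates a new bow tie, contradicting maximality.
\item Hence you get at least $t+(n-5t-\varepsilon n)/3-4\varepsilon n/3>n/5-\varepsilon n$ triangles of one colour, using $t<n/5-\varepsilon n$.
\end{enumerate}

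\textbf{Step 1 is also misstated.} The property needed is asymmetric: for all disjoint $\varepsilon n$-sets $X,Y$ and every colouring, $G[X\cup Y]$ contains a red triangle meeting $X$ or a blue triangle meeting $Y$. Such a triangle may lie entirely inside $X$ (resp.\ $Y$). Your version, a monochromatic triangle touching both sets, fails for every graph. Colour all edges inside $A$ and inside $B$ blue and all $A$--$B$ edges red; every triangle meeting both sets then has two red edges and one blue edge. The same colouring refutes the ``bipartite'' supersaturation you invoke. What is true, and what the container argument needs, is that every $2$-coloured graph with at least $(1-\eta)\binom{n}{2}$ edges has $\Omega(n^3)$ triangles that are red and meet $A$ or blue and meet $B$; this is \Cref{lem:ABsuper}.

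\textbf{The container count loses a logarithm.} A plain union bound over $\exp(O(n^{3/2}\log n))$ containers against a failure probability of $e^{-\Omega(pn^2)}$ only works for $p\ge Cn^{-1/2}\log n$. To reach $Cn^{-1/2}$ you must use the fingerprint, bounding the failure probability by $\sum_S \Pr(S\subset G\subset h(S))\le\sum_S p^{e(S)}(1-p)^{\delta\binom{N}{2}}$, as Nenadov and Steger do.
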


Before proceeding to the proof of \Cref{thm:trianglelower}, we first provide an overview of the proof. Motivated by an idea of Burr, Erd\H{o}s, and Spencer~\cite{BES}, in order to construct a large monochromatic $K_3$-tiling in a $2$-colouring of the edges of $G \sim \mathbb{G}(n,p)$ we first select a maximal family $\mathcal{T}$ of disjoint \emph{bow ties}, where by a bow tie we mean a subgraph consisting of two monochromatic triangles of different colours that share a common vertex. The benefit of considering this type of structure is that a bow tie contains a monochromatic $K_3$ in each colour, while covering only five vertices.

If this family of bow ties spans a large proportion of the vertices of the graph, we will be able to find a large enough $K_3$-tiling in both colours. If this is not the case, the next step is to show that in the remaining vertices of $\mathbb{G}(n,p)$ we can find, in one of the two colours, a $K_3$-tiling that covers almost all of the remaining vertices.

To this end, the following lemma, applied iteratively to the remaining vertices of $\mathbb{G}(n,p)$, yields many disjoint monochromatic copies of $K_3$, not necessarily all of the same colour. We observe that \Cref{lem:trianglesup} can be derived from the results of R\"{o}dl and Ruci\'{n}ski \cite{RR93, RR94, RR95} and also follows from \Cref{thm:triangletrans}, presented below.

\begin{lemma}\label{lem:trianglesup}
For every $\varepsilon>0$, there exists $C>0$ such that the following holds. If $p \geq Cn^{-1/2}$ and $G\sim \mathbb{G}(n,p)$, then with high probability, for every $2$-colouring of the edges of $G$ and every $U\subset V(G)$ with $|U|\geq \varepsilon n$, there exists a monochromatic triangle in $G[U]$.
\end{lemma}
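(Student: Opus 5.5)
The plan is to follow the Nenadov--Steger container framework for the $1$-statement of \Cref{thm:RR}, strengthened to handle every subset $U$ of size $\varepsilon n$ simultaneously. The key is a union bound that beats $\binom{n}{\varepsilon n}\le 2^{n}$ choices of $U$. The failure probability for a fixed $U$ should be $\exp(-\Omega(p|U|^2))$, which at $p\ge Cn^{-1/2}$ is $\exp(-\Omega(C\varepsilon^2 n^{3/2}))$. This is far smaller than $2^{-n}$, so the union bound over $U$ is essentially free. The whole difficulty therefore lies in the single-set statement with an exponentially small error.

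\textbf{Step 1: supersaturation.} Let $m=|U|$ and consider the $3$-uniform hypergraph $\mathcal{H}_U$ on vertex set $E(K_U)$ whose edges are the edge sets of triangles in $K_U$. A $2$-colouring of $G[U]$ with no monochromatic triangle gives two triangle-free graphs $G_1,G_2$ with $G_1\cup G_2=G[U]$. By Goodman/Ramsey, any graph on $U$ with at most $\delta m^3$ triangles is $\gamma m^2$-close to triangle-free. The same applies to pairs: if $F_1,F_2$ each contain at most $\delta m^3$ triangles, then $F_1\cup F_2$ misses at least $\gamma m^2$ edges of $K_U$. Indeed, otherwise removal gives triangle-free $F_i'$ covering all but $o(m^2)$ edges, and a $2$-colouring of almost all of $K_m$ without monochromatic triangles contradicts the Ramsey/Goodman supersaturation bound of $\Omega(m^3)$ monochromatic triangles.

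\textbf{Step 2: containers.} Apply the hypergraph container lemma to $\mathcal{H}_U$. Its codegree conditions are the standard ones for triangles, since $m_2(K_3)=2$ gives $\tau \approx m^{-1/2}$. This yields a family $\mathcal{C}$ of containers with the following properties. Each triangle-free graph on $U$ lies in some $C\in\mathcal{C}$. Each $C$ has at most $\delta m^3$ triangles. Each $C$ is determined by a fingerprint of at most $O(m^{3/2})$ edges, so $|\mathcal{C}|\le \exp(O(m^{3/2}\log m))$.

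\textbf{Step 3: the union bound.} If the event fails for $U$, there are containers $C_1,C_2$ with $G[U]\subseteq C_1\cup C_2$. Moreover the fingerprints $S_1\subseteq G_1$ and $S_2\subseteq G_2$ lie in $G$. By Step 1, $C_1\cup C_2$ misses at least $\gamma m^2$ edges of $K_U$, none of which is in $G$. We bound the probability by summing over fingerprint pairs $(S_1,S_2)$ of sizes $s_1,s_2\le Km^{3/2}$ the quantity
\[
p^{s_1+s_2}(1-p)^{\gamma m^2-s_1-s_2}.
\]
The factor $p^{s}$ pays for the $\binom{m^2}{s}$ choices of fingerprint, since $\binom{m^2}{s}p^s\le (em^2p/s)^s$. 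We exploit independence by first revealing the fingerprint edges and then the missing edges, which are disjoint from them. The result is $\exp(O(m^{3/2}\log(\cdot)) - \gamma p m^2/2)$. With $m=\varepsilon n$ and $p=Cn^{-1/2}$, the exponent $\gamma p m^2\approx \gamma C\varepsilon^2 n^{3/2}$ dominates $m^{3/2}\log(pm^{1/2})$ once $C$ is large. More carefully, one uses fingerprint size $\le K m^{3/2}$ together with $s\log(em^2p/s)\le Km^{3/2}\log(eC\varepsilon^{-1/2}/K)$, which is a small fraction of $C\varepsilon^2 n^{3/2}$ for $C$ large. Finally, union bound over the $2^n$ sets $U$.

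The main obstacle is Step 3's bookkeeping, namely making the fingerprint cost $s\log(m^2p/s)$ genuinely smaller than $\gamma pm^2$ with constants uniform in $\varepsilon$. This is exactly why $C$ must depend on $\varepsilon$. I would follow Nenadov--Steger's computation verbatim, with the container lemma applied at the natural scale $\tau=Km^{-1/2}$.
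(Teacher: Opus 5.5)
Your proof is correct, but it uses a different decomposition from the paper's. You run the Nenadov--Steger argument for the $1$-statement of \Cref{thm:RR} directly. That means two uncoloured triangle containers, one for each colour class, Goodman's bound to show that the union of any two containers misses $\Omega(m^2)$ edges of $K_U$, and a union bound over the at most $2^n$ sets $U$. The union bound is affordable because each failure probability is $e^{-\Omega(p\varepsilon^2 n^2)}$.

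The paper instead gets the lemma as a one-line corollary of the richness statement \Cref{thm:triangletrans} (the case $H=K_3$ of \Cref{thm:Htrans}). It splits an $\varepsilon n$-subset of $U$ into halves $X,Y$; a red triangle meeting $X$ or a blue triangle meeting $Y$ in $G[X\cup Y]$ is in particular a monochromatic triangle in $G[U]$. Richness is proved with a single container lemma for $2$-coloured graphs (\Cref{lem:ABcontainer}). It is applied to the hypergraph on $E(K_N)\times\{b,r\}$ whose edges are the $(X,Y)$-good copies. The asymmetric supersaturation result \Cref{lem:ABsuper} (via Lov\'asz--Simonovits, Ramsey and K\H{o}v\'ari--S\'os--Tur\'an) plays the role of Goodman's bound, and the union bound runs over pairs $(X,Y)$.

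Your route is more elementary and self-contained for this lemma. The paper's costs more machinery but yields the stronger richness property, which it needs anyway to build bow ties and clusters. Your argument does not give richness as it stands. You would need containers for red graphs with no triangle meeting $X$ and blue graphs with no triangle meeting $Y$, and Goodman's bound would have to be replaced by exactly the asymmetric supersaturation that \Cref{lem:ABsuper} supplies.

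Two small points.

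\emph{Step 1.} The claim that a graph with at most $\delta m^3$ triangles is $\gamma m^2$-close to triangle-free is the triangle removal lemma, not Goodman or Ramsey, and you do not need it. Suppose $C_1\cup C_2$ missed fewer than $\gamma m^2$ edges. Colour $C_1$ red, $C_2\setminus C_1$ blue, and the missing edges arbitrarily. Goodman gives $(1/24-o(1))m^3$ monochromatic triangles, and at most $\gamma m^3$ of them use a missing edge. So $C_1$ or $C_2$ would contain more than $\delta m^3$ triangles once $\gamma,\delta$ are small.

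\emph{Step 3.} At $p=Cn^{-1/2}$ and $m=\varepsilon n$ you have $m^{1/2}p=C\varepsilon^{1/2}$, so the logarithm is $\log(eC\varepsilon^{1/2}/K)$. The factor $p^{s_1+s_2}$ is legitimate because $S_1,S_2$ lie in disjoint colour classes, so you may sum over disjoint pairs only. Neither point affects the conclusion.
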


Let $\cB$ and $\cR$ be the families of disjoint blue and red triangles, respectively, obtained by iteratively applying \Cref{lem:trianglesup}. Recall that we set aside $\mathcal{T}$, a maximal family  of bow ties. If one of these two families, together with the bow ties we set aside, provides enough disjoint triangles of a single colour, we find the large monochromatic $K_3$-tiling required by \Cref{thm:trianglelower}. Otherwise, $\cB$ and $\cR$ are both of size $\Theta(n)$. The next step is to show that this cannot happen. Indeed, in this case, we will show that one can find a new monochromatic triangle intersecting these disjoint families, which yields a new bow tie, contradicting the maximality of~$\mathcal{T}$.

The difficulty here is to construct this new bow tie using the structural properties given by families $\cB$ and $\cR$. To deal with this, we prove that $\mathbb{G}(n,p)$ satisfies a certain property with high probability. To make this precise, we first introduce the following definition.

\begin{definition}\label{def:K3rich}
Let $s \in \mathbb{N}$. We say that a graph $G$ is $(K_3,s)$-\textit{rich} if for every $2$-colouring of the edges of $G$ and every pair of disjoint $s$-sets $X, Y \subset V(G)$, at least one of the following holds:

\begin{enumerate}[label =\textnormal{(\roman*)}]
\item\label{item:2.3.i} there exists a red triangle $T$ in $G[X\cup Y]$ such that $|V(T) \cap X| \ge 1$;
\item\label{item:2.3.ii} there exists a blue triangle $T$ in $G[X\cup Y]$ such that $|V(T) \cap Y| \ge 1$.
\end{enumerate}
\end{definition}

We now describe how to find a bow tie assuming this property. If each of the families $\cB$ and $\cR$ spans at least $s$ vertices and $\mathbb{G}(n,p)$ is $(K_3,s)$-rich, then there exists either a red triangle intersecting the vertex set spanned by $\cB$ or a blue triangle intersecting the vertex set spanned by $\cR$. Without loss of generality, let $T$ be a red triangle that intersects the vertex set spanned by $\cB$, and let $T'$ be a blue triangle in $\cB$ that intersects $T$. Since $T$ and $T'$ have different colours, they intersect in exactly one vertex, and hence $T \cup T'$ is a bow tie.
    
The following theorem shows that, in the range of $p$ that we are interested in, the random graph $\mathbb{G}(n,p)$ is $(K_3,s)$-rich with high probability for $s$ linear in $n$.

\begin{theorem}\label{thm:triangletrans}
For every $\varepsilon > 0$, there exists $C>0$ such that the following holds. If $p\geq Cn^{-1/2}$ and $G\sim \mathbb{G}(n,p)$, then with high probability, $G$ is $(K_3,\varepsilon n)$-rich.
\end{theorem}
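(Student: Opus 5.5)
We follow the Nenadov--Steger container framework. Fix disjoint $s$-sets $X,Y$ with $s=\varepsilon n$ and suppose a $2$-colouring of $G[X\cup Y]$ violates both alternatives. Then the red graph $R$ has no triangle meeting $X$, and the blue graph $B$ has no triangle meeting $Y$. In particular, $G[X]$ has a $2$-colouring whose red part is triangle-free and whose blue part may contain triangles. The argument must use both sets, so we encode the colouring by the pair of edge sets $(R,B)$ with $R\cup B=E(G[X\cup Y])$.

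Consider the hypergraph $\cH$ on vertex set $E(K_{X\cup Y})\times\{\text{red},\text{blue}\}$. Its hyperedges are the coloured triangles that are forbidden: red triangles meeting $X$ and blue triangles meeting $Y$. A bad colouring corresponds to an independent set $I$ of $\cH$ whose projection covers $E(G[X\cup Y])$.

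\textbf{Supersaturation.} I would first prove a deterministic statement. Take any pair $(R',B')$ of graphs on $X\cup Y$ that together cover all but at most $\delta s^2$ pairs. Then $(R',B')$ contains $\Omega(s^3)$ forbidden coloured triangles. To see this, look at the bipartite pairs $X\times Y$ together with the pairs inside $X$ and inside $Y$. By Ramsey/Goodman-type counting on $K_{2s}$, essentially complete $2$-coloured graphs contain $\Omega(s^3)$ monochromatic triangles. A triangle that is monochromatic red either lies entirely inside $Y$ or meets $X$. A blue one either lies inside $X$ or meets $Y$. So it suffices to show that the triangles lying inside $Y$ in red and inside $X$ in blue cannot account for almost all monochromatic triangles. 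This should follow from the Burr--Erd\H{o}s--Spencer style bow-tie counting. Alternatively, count triangles with two vertices in $Y$ and one in $X$ with all three edges of the same colour; these are forbidden in either colour. Taking $x\in X$ and a red or blue star into $Y$ and applying Goodman inside $Y$ gives $\Omega(s^3)$ such triangles whenever the density is near $1$.

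\textbf{Containers.} Next apply the hypergraph container lemma to the $3$-uniform hypergraph $\cH$ on $2\binom{2s}{2}$ vertices. Its codegree conditions hold with parameter $\tau\approx s^{-1/2}$, exactly as in Nenadov--Steger for triangles. This yields a family $\mathcal{C}$ of containers with $|\mathcal{C}|\le \exp(O(\tau s^2\log(1/\tau)))=\exp(O(s^{3/2}\log n))$. Each container $C=(R_C,B_C)$ spans fewer than $\delta' s^3$ hyperedges, so by supersaturation the union $R_C\cup B_C$ misses at least $\delta s^2$ pairs of $X\cup Y$. A bad colouring of $G$ forces $E(G[X\cup Y])\subseteq R_C\cup B_C$ for some container, hence $G$ avoids the $\delta s^2$ missing pairs. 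That event has probability at most $(1-p)^{\delta s^2}\le e^{-\delta p s^2}$.

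\textbf{Union bound.} Finally, sum over at most $4^n$ choices of $(X,Y)$ and over the containers. The total is
\[
4^n\exp\bigl(O(s^{3/2}\log n)-\delta C n^{-1/2}s^2\bigr).
\]
This requires $Cn^{3/2}\varepsilon^2\delta\gg n^{3/2}\log n$, which the naive fingerprint count does not give. I would therefore use the refined container count $|\mathcal{C}|\le\exp(c(\varepsilon)\,s^{2}\tau)$ with $\tau=K n^{-1/2}$, without the log factor, as Nenadov--Steger do by choosing fingerprints of size $\tau s^2$. Taking $C$ large then makes the sum $o(1)$.

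The main obstacle is the supersaturation step. A plain Ramsey count does not suffice, because monochromatic triangles in the allowed positions (red inside $Y$, blue inside $X$) are not forbidden. The fix I would try first is to restrict attention to triangles with vertices in both $X$ and $Y$ and to show robustly that a near-complete coloured graph contains $\Omega(s^3)$ mixed triangles that are monochromatic. If this fails, for example when all $X$--$Y$ edges are coloured alternately, I would fall back on counting red triangles inside $X$ or blue triangles inside $Y$, whose absence forces $G[X]$ to be blue-dense and so creates blue triangles meeting $Y$ through the cross edges.
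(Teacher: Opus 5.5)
Your framework matches the paper's: the hypergraph on $E(K_{X\cup Y})\times\{r,b\}$ whose edges are the forbidden coloured triangles, supersaturation, containers with $\tau\approx n^{-1/2}$, and a union bound over $(X,Y)$. However, the two steps you flag yourself are not carried out, and the fixes you propose do not work.

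\emph{Supersaturation.} Counting monochromatic triangles that meet both $X$ and $Y$ cannot succeed. Colour every edge inside $X$ and inside $Y$ red and every $X$--$Y$ edge blue. Then no monochromatic triangle meets both sides, and all forbidden triangles are red triangles inside $X$. Your star-plus-Goodman step breaks because the monochromatic edges Goodman finds inside $Y$ need not have the colour of the star from $x$.

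Your fallback is only half an argument. Take $X$ a blue clique, $Y$ a red clique and all cross edges red. There are no red triangles in $X$, no blue ones in $Y$, and no blue triangle meeting $Y$ at all; the forbidden triangles are the red $xy_1y_2$. You must play the blue structure in $X$ and the red structure in $Y$ off against each other, for an arbitrary colouring of the cross edges. For triangles this is short:
\begin{enumerate}
\item If there are fewer than $\eta s^3$ forbidden triangles, Goodman's bound in the nearly complete $G[X]$ and $G[Y]$ gives $\Omega(s^3)$ blue triangles in $X$ and $\Omega(s^3)$ red triangles in $Y$.
\item Take such a pair $x_1x_2x_3$, $y_1y_2y_3$ with all nine cross pairs present; for $\delta$ small this holds for all but $O(\delta s^6)$ of the $\Omega(s^6)$ pairs.
\item Either some $x_i$ sends two red edges into $\{y_1,y_2,y_3\}$, giving a red triangle meeting $X$, or every $x_i$ sends at least two blue edges. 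In the latter case some $y_j$ receives two blue edges, giving a blue triangle meeting $Y$.
\item Each triangle lies in $O(s^3)$ such pairs, so double counting yields $\Omega(s^3)$ forbidden triangles.
\end{enumerate}
This is the case $H=K_3$ of \Cref{lem:ABsuper}. There, Ramsey plus K\H{o}v\'ari--S\'os--Tur\'an on pairs of monochromatic $K_s$'s replaces this $3\times 3$ pigeonhole.

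\emph{The union bound.} There is no container count $\exp(c\,\tau s^2)$ ``without the log factor'' to invoke. The container theorem only controls fingerprints, which range over subsets of $V(\cH)$ of size at most $\gamma\tau v(\cH)$. There are $\exp(\Theta(\tau s^2\log(1/\tau)))$ of these, and rescaling $\tau$ by a constant leaves $\log(1/\tau)\approx\frac12\log n$.

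What Nenadov--Steger (and the paper) exploit is the property $g(I)\subseteq I$: the fingerprint $S$ is a coloured subgraph of $G[X\cup Y]$ itself. So the bad event for a given $S$ is $S\subseteq G[X\cup Y]\subseteq h(S)$, with $N=2s$. This has probability at most $p^{e(S)}(1-p)^{\delta\binom{N}{2}}$, and
\[
\sum_{e(S)\le \lambda pN^2}p^{e(S)}\le\sum_{j\le\lambda pN^2}\binom{N^2}{j}p^j\le N^2\Big(\frac{e}{\lambda}\Big)^{\lambda pN^2}.
\]
Here $\lambda=O(\gamma/(C\sqrt{\varepsilon}))$, because fingerprints have $O(N^{3/2})$ edges. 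Since $\lambda\log(e/\lambda)\to 0$ as $C\to\infty$, this sum is at most $e^{\delta pN^2/4}$ for $C$ large. That loses to $(1-p)^{\delta\binom{N}{2}}$, and the result survives the $4^n$ choices of $(X,Y)$. If you use only the event that $G$ avoids the missing pairs, without the factor $p^{e(S)}$, your estimate does not close.
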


We remark that \Cref{lem:trianglesup} and \Cref{thm:triangletrans} correspond to the special case $H=K_3$ of \Cref{lem:Hsup} and \Cref{thm:Htrans}, respectively, presented below. We discuss the proof of these generalizations when treating the general case. 

\begin{proof}[Proof of \Cref{thm:trianglelower} assuming \Cref{thm:triangletrans}] Given $\varepsilon>0$, let $C$ be sufficiently large and let $G \sim \mathbb{G}(n,p)$ for some $p \ge Cn^{-1/2}$. By \Cref{lem:trianglesup} and \Cref{thm:triangletrans}, with high probability $G$ is $(K_3,4\varepsilon n)$-rich and for every $2$-colouring of the edges of $G$ and every $U\subset V(G)$ with $|U|\geq \varepsilon n$, there exists a monochromatic triangle in $G[U]$. Our goal is to show that for such graphs $G$, every $2$-colouring of the edges of $G$ contains a monochromatic $K_3$-tiling with at least $n/5-\varepsilon n$ triangles.

For a fixed $2$-colouring of $G$, let $\mathcal{T}$ be a maximal collection of vertex-disjoint bow ties in $G$, and let $T = T_1\cup T_2\cup\dots\cup T_t$ be the vertex sets of the bow ties in $\mathcal{T}$. Suppose that $t < n/5 - \varepsilon n$, since otherwise we are done. By \Cref{lem:trianglesup}, we can find $(n - 5t - \varepsilon n)/3$ disjoint monochromatic triangles in $V(G)\setminus T$. Note that all but at most $4\varepsilon n/3$ of these triangles must have the same colour. Indeed, if there are at least $4\varepsilon n/3$ triangles of each colour, let $B$ and $R$ be the sets of vertices spanned by $4\varepsilon n/3$ blue and $4\varepsilon n/3$ red triangles, respectively. Since $G$ is $(K_3, 4\varepsilon n)$-rich, the sets $B$ and $R$ will provide a new bow tie that extends the collection of bow ties, contradicting the maximality of $\mathcal{T}$. Hence, we can find a monochromatic triangle tiling with at least
\[t + \frac{n - 5t - \varepsilon n}{3}-\frac{4\varepsilon n}{3}>\frac{n}{5}-\varepsilon n\]
copies of $K_3$, where the inequality is true since $t<n/5-\varepsilon n$. This completes the proof.
\end{proof}

\subsection{General $H$-tiling in random graphs}\label{subsec:2.2}
Analogously to the $K_3$ case, to construct a large monochromatic $H$-tiling in a $2$-colouring of the edges of $G \sim \mathbb{G}(n,p)$, a natural first approach is to consider bow-tie-like structures consisting of two monochromatic copies of $H$ in different colours whose union covers a small number of vertices. One might then try to construct such structures by taking two monochromatic copies of $H$ in different colours intersecting in a maximum independent set of~$H$. As first introduced in \cite{BES}, such coloured graphs are called $H$-\emph{ties}. Finding a large family of vertex-disjoint $H$-ties immediately yields a large monochromatic $H$-tiling. This is the main idea of the construction for~$K_n$ found in \cite{BES}.

By adapting the proof of the triangle case to the general setting, we can similarly find families $\cB$ and $\cR$ of disjoint monochromatic copies of $H$ in blue and red, respectively, each of size $\Theta(n)$. Let $B$ be the vertex set spanned by $\cB$ and let $R$ be the vertex set spanned by $\cR$. We then seek to find an $H$-tie using the structure of $\cB$ and $\cR$, by finding either a red copy of $H$ that intersects $B$ in at least $\alpha(H)$ vertices or a blue copy of $H$ that intersects $R$ in at least $\alpha(H)$ vertices. The difficulty here is that, unlike the triangle case, finding such a monochromatic copy of $H$ does not guarantee the existence of an $H$-tie. For instance, if the new copy is red and intersects $B$ in at least $\alpha(H)$ vertices, these vertices may lie in several different blue copies of $H$ in $B$. Observe that this problem does not occur when $H$ is a clique, since in this case $\alpha(H)=1$. In fact, we can prove \Cref{thm:Hlower} when $H$ is a clique by following the same steps as in the proof of \Cref{thm:trianglelower}.

To address the difficulty discussed above for general $H$, rather than looking for individual copies of $H$-ties that yield only one red and one blue copy of $H$, we introduce the following broader and more flexible structure.

\begin{definition}\label{def:clusters} Let $H$ be a graph on $k$ vertices whose independence number is $\alpha$, and $\eta\geq 0$ be a constant. A nonempty $2$-coloured graph $F$ is called an $(H,\eta)$-\emph{cluster} if it contains both a red and a blue $H$-tiling, each with at least $v(F)/(2k-\alpha)-\eta v(F)$ copies of $H$.
\end{definition}

Unlike the rigid structure of ordinary $H$-ties, these $(H,\eta)$-clusters are defined solely by their capacity to produce comparable amounts of red and blue copies of $H$ while using relatively few vertices. Luckily, an extension of \Cref{def:K3rich} for a general graph $H$ is still useful to find $(H, \eta)$-clusters.

\begin{definition}\label{def:Hrich}
Let $H$ be a graph and let $s$ be a positive integer. We say that a graph $G$ is $(H,s)$-\textit{rich} if for every $2$-colouring of the edges of $G$ and every pair of disjoint $s$-sets $X, Y \subset V(G)$, at least one of the following holds:
\begin{enumerate}[label =\textnormal{(\roman*)}]
\item\label{item:3.4.i} there exists a red copy $H'$ of $H$ in $G[X\cup Y]$ such that $|V(H') \cap X| \ge \alpha(H)$;
\item\label{item:3.4.ii} there exists a blue copy $H'$ of $H$ in $G[X\cup Y]$ such that $|V(H') \cap Y| \ge \alpha(H)$.
\end{enumerate}
\end{definition}

Analogously to the triangle case, we show that, in the range of $p$ that we are interested in, the random graph $\mathbb{G}(n,p)$ is $(H,s)$-rich with high probability for $s$ linear in $n$. This will also be crucial in identifying an $(H,\eta)$-cluster.

\begin{theorem}\label{thm:Htrans}
Let $H$ be a graph. For every $\varepsilon > 0$, there exists $C>0$ such that the following holds. If $p\geq Cn^{-1/\max\{m_2(H),1\}}$ and $G\sim \mathbb{G}(n,p)$, then with high probability $G$ is $(H,\varepsilon n)$-rich.
\end{theorem}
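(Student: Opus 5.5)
We will follow the Nenadov--Steger framework: a container lemma plus a union bound over colourings encoded by containers. Throughout, write $\alpha=\alpha(H)$ and $m=\max\{m_2(H),1\}$.

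\textbf{Step 1: the forbidden configurations.} For fixed disjoint $s$-sets $X,Y$ with $s=\varepsilon n$, failing richness means the following. The red graph $R$ on $X\cup Y$ contains no copy of $H$ with at least $\alpha$ vertices in $X$, and the blue graph $B$ contains no copy of $H$ with at least $\alpha$ vertices in $Y$. We set up two $e(H)$-uniform hypergraphs on the edge set of $K_n[X\cup Y]$ (or on $E(K_n)$ with $X,Y$ as parameters).
\begin{itemize}
\item $\mathcal{H}_X$ has as edges the edge sets of copies of $H$ meeting $X$ in at least $\alpha$ vertices.
\item $\mathcal{H}_Y$ is defined analogously with $Y$.
\end{itemize}
A bad colouring of $G[X\cup Y]$ gives a pair $(R,B)$ with $R\cup B=E(G[X\cup Y])$, $R$ independent in $\mathcal{H}_X$ and $B$ independent in $\mathcal{H}_Y$.

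\textbf{Step 2: supersaturation.} This is the analogue of \Cref{lem:ABsuper}, and it is the combinatorial heart of the argument. The claim is as follows: if $E(K_n[X\cup Y])=A\cup A'$, then either $A$ contains $\Omega(n^{k})$ copies of $H$ meeting $X$ in at least $\alpha$ vertices, or $A'$ contains $\Omega(n^k)$ copies meeting $Y$ in at least $\alpha$ vertices.

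The plan is a Ramsey-plus-averaging argument. Consider random $t$-subsets $X_0\subset X$, $Y_0\subset Y$ with $t$ a large constant. It suffices to show that every 2-colouring of $K[X_0\cup Y_0]$ has either a red $H$ with at least $\alpha$ vertices in $X_0$ or a blue $H$ with at least $\alpha$ vertices in $Y_0$. Counting over choices of $X_0,Y_0$ then gives $\Omega(n^k)$ copies.

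For the finite statement, apply Ramsey's theorem inside $X_0$. A red $K_k$ in $X_0$ already suffices, since it has $k\ge\alpha$ vertices in $X_0$. So we may assume $X_0$ contains a large blue clique $X_1$, and symmetrically $Y_0$ contains a large red clique $Y_1$.

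Now look at the bipartite colouring between $X_1$ and $Y_1$. By Kővári--Sós--Turán or bipartite Ramsey, there is a large monochromatic complete bipartite graph between $X_2\subseteq X_1$ and $Y_2\subseteq Y_1$; say it is blue. Embed $H$ in blue as follows. Place $k-\alpha$ vertices, including a maximum independent set's complement, inside the blue clique $X_2$. Place an independent set of size $\alpha$ inside $Y_2$. All edges used are then blue: the edges inside $X_2$, and the edges between $X_2$ and $Y_2$. The independent part carries no internal edges, so the red colour of $Y_2$ does not matter. This copy has $\alpha$ vertices in $Y$, as required. The red case is symmetric.

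\textbf{Step 3: containers.} Apply the container theorem, in its asymmetric/two-hypergraph version as in \Cref{lem:ABcontainer}, to $\mathcal{H}_X$ and $\mathcal{H}_Y$. This uses the standard codegree computations, which give $\Delta_\ell$ bounds controlled by $m_2(H)$ at $p\ge Cn^{-1/m_2(H)}$. The outcome is as follows.
\begin{itemize}
\item There is a family of container pairs $(C_1,C_2)$, each with a fingerprint of size $O(n^{2-1/m}) = O(pn^2/C)$ edges.
\item Each pair has $C_1\cup C_2$ missing at least $\delta n^2$ edges of $K_n[X\cup Y]$, by Step 2 together with the removal/supersaturation quantification.
\item Every bad pair $(R,B)$ lies in some $(C_1,C_2)$ determined by fingerprints $S_1\subseteq R$, $S_2\subseteq B$.
\end{itemize}

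\textbf{Step 4: union bound.} A bad colouring forces $G$ to contain $S_1\cup S_2$ and to avoid the $\delta n^2$ edges outside $C_1\cup C_2$. Summing over $X,Y$ (at most $4^n$ choices) and fingerprints gives
\[
\sum_{j\le c\,n^{2-1/m}}\binom{n^2}{j}p^j(1-p)^{\delta n^2}\le \exp\bigl(O(n^{2-1/m}\log C)\bigr)\,e^{-\delta pn^2}=o(1)
\]
for $C$ large, since $pn^2\ge Cn^{2-1/m}\gg n$. The case $m=1$ handles matchings and forests, where $m_2<1$ and the codegree conditions are met at $p=C/n$.

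The main obstacle I expect is Step 2 together with the container codegree bookkeeping for copies constrained to meet $X$ in at least $\alpha$ vertices. The finite Ramsey embedding sketched above seems to work. What needs care is the quantitative balance between the two hypergraphs, since one colour class may be sparse.
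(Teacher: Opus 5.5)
Your proposal is correct. Its skeleton matches the paper's: your two hypergraphs $\mathcal{H}_X,\mathcal{H}_Y$ are exactly the red and blue halves of the paper's single hypergraph on $E(K_N)\times\{b,r\}$, and your Steps 3--4 correspond to \Cref{lem:ABcontainer} together with the union bound over $X,Y$ and fingerprints.

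The genuine difference is how you prove supersaturation.
\begin{itemize}
\item The paper's \Cref{lem:ABsuper} works directly in a graph with at least $(1-\eta)\binom{n}{2}$ edges. Lov\'asz--Simonovits gives $\Omega(n^R)$ copies of $K_R$ on each side. Ramsey turns these into $\Omega(n^s)$ blue $K_s$'s in $A$ and red $K_s$'s in $B$. A double count of non-edges gives many pairs $(U,W)$ with $e(G[U,W])\ge s^2/2$, and K\H{o}v\'ari--S\'os--Tur\'an then supplies the monochromatic $K_{k,k}$.
\item You isolate the Ramsey content as a finite statement about $2$-colourings of the complete graph on $X_0\cup Y_0$ with $|X_0|=|Y_0|=t$. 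You get $\Omega(N^k)$ good copies by averaging over $(X_0,Y_0)$, and then pass to dense graphs by deletion.
\end{itemize}
The core embedding is the same in both: monochromatic cliques of opposite colours on the two sides, a monochromatic $K_{k,k}$ between them, and the maximum independent set placed on the side whose clique has the wrong colour. Your organisation is the more classical Erd\H{o}s--Simonovits one. It avoids Lov\'asz--Simonovits and the non-edge count. The price is the extra deletion step, which you only gesture at as ``removal/supersaturation quantification'' and should state explicitly: if $C_1\cup C_2$ misses a set $M$ of fewer than $\delta N^2$ edges, colour $C_1\cup M$ red and the rest blue, then discard the $O(\delta N^k)$ good copies that use an edge of $M$.

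Some small points.
\begin{itemize}
\item Your Step 4 bound $\exp(O(n^{2-1/m}\log C))$ on the fingerprint sum is only accurate when $p=\Theta(Cn^{-1/m})$. For larger $p$ the sum grows with $p$, though it is still $e^{o(\delta pn^2)}$. The cleanest fix is to note that richness is an increasing property, so you may take $p=Cn^{-1/m}$. Alternatively, bound fingerprint sizes by $\lambda pN^2$ with $\lambda$ small, as the paper does.
\item The ``balance between the two hypergraphs'' you worry about is not an issue. The containers for $R$ and $B$ are produced independently (equivalently, by one application to the disjoint union), and supersaturation is applied only to the pair $(C_1,C_2)$. So the sparsity of either colour class plays no role.
\item Only matchings have $m_2(H)<1$; trees have $m_2(H)=1$. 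This is harmless for your argument.
\end{itemize}
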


We discuss \Cref{thm:Htrans} in detail in \Cref{subsec:containers}. For now, we outline how to finish the proof of \Cref{thm:Hlower}. As in the triangle case, we first set aside, for a suitable choice of $\eta$, a maximal family $\mathcal{T}$ of disjoint $(H,\eta)$-clusters. If this family spans a large proportion of the vertices, we obtain a large enough $H$-tiling in both colours. If this is not the case, we iteratively apply the following generalization of \Cref{lem:trianglesup}, thereby obtaining two families $\cB$ and $\cR$ of disjoint blue and red copies of $H$, respectively.

\begin{lemma}\label{lem:Hsup}
Let $H$ be a graph. For every $\varepsilon >0$, there exists $C=C(H,\varepsilon)>0$ such that the following holds. If $p\geq Cn^{-1/\max\{m_2(H),1\}}$ and $G\sim \mathbb{G}(n,p)$, then, with high probability, for every $2$-colouring of the edges of $G$ and every $U\subset V(G)$ with $|U|\geq \varepsilon n$, there exists a monochromatic copy of $H$ in $G[U]$. 
\end{lemma}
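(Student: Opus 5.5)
The plan is to deduce \Cref{lem:Hsup} directly from \Cref{thm:Htrans}, as the paper indicates was possible for the triangle case. Apply \Cref{thm:Htrans} with $\varepsilon/2$ in place of $\varepsilon$. This gives $C$ such that, for $p\ge Cn^{-1/\max\{m_2(H),1\}}$, with high probability $G$ is $(H,\varepsilon n/2)$-rich. Fix such a $G$, a $2$-colouring of its edges, and a set $U$ with $|U|\ge \varepsilon n$. Choose disjoint sets $X,Y\subset U$ with $|X|=|Y|=\varepsilon n/2$. By \Cref{def:Hrich}, there is either a red copy $H'$ of $H$ in $G[X\cup Y]$ meeting $X$ in at least $\alpha(H)$ vertices, or a blue copy meeting $Y$ in at least $\alpha(H)$ vertices. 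In either case, $H'$ is a monochromatic copy of $H$ inside $G[X\cup Y]\subseteq G[U]$, which is exactly what the lemma asks for.

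The only points to check are degenerate ones. When $H$ has no edges, the statement is trivial, since any $k$ vertices of $U$ span a copy of $H$ in both colours. Where $\alpha(H)$ might exceed what the sets allow, there is no problem, because $\varepsilon n/2\ge k$ for large $n$. The probability parameter is inherited unchanged, so the threshold $n^{-1/\max\{m_2(H),1\}}$ matches.

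With this deduction the whole content of \Cref{lem:Hsup} rests on \Cref{thm:Htrans}. The genuine obstacle is therefore the container argument behind \Cref{thm:Htrans}. That argument needs a supersaturation statement for the relevant coloured configurations, namely \Cref{lem:ABsuper}, which feeds into the container lemma \Cref{lem:ABcontainer}. One then takes a union bound over containers, in the manner of Nenadov and Steger. Since the paper assumes that machinery as an earlier result, the deduction above is essentially immediate.

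If one instead wanted a self-contained proof avoiding \Cref{thm:Htrans}, the approach would follow Nenadov and Steger's container proof of the $1$-statement of \Cref{thm:RR}, restricted to vertex sets $U$ of size $\varepsilon n$. The main step there would be to show that every $2$-colouring of $K_n[U]$ contains $\Omega(n^{v(H)})$ monochromatic copies of $H$, which follows by averaging Ramsey's theorem over $R(H)$-subsets. One would then apply a container lemma to the hypergraph of copies of $H$, and finish with a union bound over the at most $2^n$ choices of $U$.
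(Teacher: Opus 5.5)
Your proposal is correct and is essentially the paper's own argument: apply \Cref{thm:Htrans} with $\varepsilon/2$ to obtain $(H,\varepsilon n/2)$-richness, then split an $\varepsilon n$-subset of $U$ into halves $X,Y$, so that either outcome of richness gives a monochromatic copy of $H$ in $G[U]$. Your remarks on degenerate cases (edgeless $H$, and $\varepsilon n/2\ge k$ for large $n$) are harmless additions that the paper leaves implicit.
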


Before proceeding, we can easily derive \Cref{lem:Hsup} from \Cref{thm:Htrans}. We emphasize that \Cref{lem:Hsup} can also be derived from the results of R\"{o}dl and Ruci\'{n}ski \cite{RR93, RR94, RR95}.

\begin{proof}[Proof of \Cref{lem:Hsup} assuming \Cref{thm:Htrans}]
By \Cref{thm:Htrans}, there exists a constant $C>0$ such that, for \mbox{$p\ge Cn^{-1/\max\{m_2(H),1\}}$}, with high probability $G$ is $(H,\varepsilon n/2)$-rich. By the definition of richness, it follows that any partition $X \cup Y$ of an arbitrary $\varepsilon n$-subset of $U$ with $|X|=|Y|=\varepsilon n/2$ yields a monochromatic copy of $H$ in $G[U]$.
\end{proof}

Following the steps of the triangle case, if the families $\cB$ and $\cR$ obtained above, together with the $(H,\eta)$-clusters in the family $\mathcal{T}$ we set aside, provide sufficiently many disjoint copies of $H$ in a single colour, then we obtain the desired $H$-tiling. Otherwise, we may assume that both $\cB$ and $\cR$ span sets of vertices of size $\Theta(n)$. The final step is to show that this cannot happen, since in this case we can use the $(H,s)$-richness property and the structure of $\cB$ and $\cR$ to construct a new $(H,\eta)$-cluster, thereby contradicting the maximality of the previously selected family of disjoint clusters.

\subsection{Constructing an $(H,\eta)$-cluster}\label{subsec:2.3} Unlike the triangle case, the difficulty here is that the $(H,s)$-richness property does not directly yield a new $(H,\eta)$-cluster. We briefly sketch how to construct such an $(H,\eta)$-cluster under the assumption of that property. For simplicity, we assume $\eta = 0$ and that each of the families $\cB$ and $\cR$ above has the same size and spans $2m$ vertices.

A cluster can be viewed as a $2$-coloured graph $F$ that contains both a red and a blue $H$-tiling of size approximately $v(F)/(2k-\alpha)$. To construct such a set, let $B$ and $R$ be the vertex sets spanned by $\cB$ and $\cR$, respectively. We then reserve subsets $B_2 \subset B$ and $R_2 \subset R$, each consisting of the vertex set spanned by half of the copies of $H$ in $\cB$ and $\cR$, respectively. These reserved copies will later ensure that the red and blue $H$-tilings in $F$ have the correct size.

Next, we apply the $(H,s)$-richness property using the $m$-sets $B_1 = B \setminus B_2$ and $R_1 = R \setminus R_2$ to construct either a red copy of $H$ intersecting $B_1$ in at least $\alpha(H)$ vertices or a blue copy of $H$ intersecting $R_1$ in at least $\alpha(H)$ vertices. If $m$ is sufficiently large compared to $s$, this procedure can be repeated $(m-s)/k$ times, producing $(m-s)/k$ disjoint monochromatic copies of $H$. For simplicity, suppose that the process stops because fewer than $s$ unused vertices remain in $B_1$. For the sake of brevity, we only analyze here the case where $(m-s)/(2k)$ of these copies are red and each intersects $B_1$ in at least $\alpha(H)$ vertices.

Let $Y$ be the set of vertices defined by the union of these $(m-s)/(2k)$ red copies and let $T' = B_1 \cup Y$. Observe that $G[T']$ contains both a blue $H$-tiling of size $m/k$, since $B_1 \subset T'$, and a red $H$-tiling of size $(m-s)/(2k)$, since $Y \subset T'$. To balance the sizes of the tilings, we now use the reserved set $R_2$. Precisely, we choose a collection of $(m+s)/(2k)$ red copies of $H$ whose vertex set is contained in $R_2$. Let $Z$ be the set of vertices spanned by these copies and define $T = T' \cup Z$. Since $T'$ is disjoint from $R_2$, $G[T]$ contains both a red and a blue $H$-tiling of size $m/k$. Finally, we claim that
\[
|T| \le s + m \le (2k - \alpha)\frac{m}{k}.
\]
To see the first inequality is true, observe that $Y \cup Z$ is the vertex set of an $H$-tiling of size $m/k$ and at most $s$ vertices from $B_1$ are not used to construct this tiling, since our process stopped because fewer than $s$ unused vertices remained in $B_1$. The second inequality follows from the assumptions that $2k-\alpha >k$ and that $m$ is sufficiently large compared to $s$. These observations show that $G[T]$ is an $(H,0)$-cluster.

The following lemma forms the core of the proof of \Cref{thm:Hlower} and synthesizes the conclusions of the preceding analysis.

\begin{restatable}{lemma}{generalHtie}
\label{lem:generalHtie}
Let $H$ be a graph on $k \ge 3$ vertices. Let $s$ be a positive integer and let $\eta>0$. If $G$ is an $(H,\eta^2s)$-rich graph, then for every pair of disjoint $s$-sets $X, Y \subset V(G)$ and every $2$-colouring of the edges of $G$ satisfying that
\begin{enumerate}[label=\textnormal{(\roman*)}]
    \item \label{item:3.5.i} $G[X]$ contains a blue $H$-tiling with $s/k$ copies of $H$;
    \item \label{item:3.5.ii} $G[Y]$ contains a red $H$-tiling with $s/k$ copies of $H$,
\end{enumerate}
there exists a vertex set $T \subset X \cup Y$ such that $G[T]$ forms an $(H,\eta)$-cluster.
\end{restatable}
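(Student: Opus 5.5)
The plan is to make the sketch of \Cref{subsec:2.3} rigorous, with $\eta$-slack replacing the idealised assumptions. Write $s'=\eta^2 s$. Split the blue tiling in $G[X]$ into two halves, and let $X_1,X_2$ be the vertex sets they span, each of size $s/2$ and carrying a blue $H$-tiling of $s/(2k)$ copies. Split $Y$ into $Y_1,Y_2$ in the same way using the red tiling. The sets $X_2,Y_2$ are reserves, used only at the end to balance the two colours.

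\textbf{Step 1: greedy extraction.} Run the following process. While at least $s'$ unused vertices remain in both $X_1$ and $Y_1$, choose $s'$-subsets $X'\subset X_1$ and $Y'\subset Y_1$ of unused vertices. Apply $(H,s')$-richness to $X',Y'$. This gives either a red copy of $H$ inside $X'\cup Y'$ meeting $X'$ in at least $\alpha$ vertices, or a blue copy meeting $Y'$ in at least $\alpha$ vertices. Mark its vertices as used. The copies found are pairwise disjoint and lie in $X_1\cup Y_1$. Let $\cR'$ be the $r$ red copies and $\cB'$ the $b$ blue copies so obtained. The process stops when, say, fewer than $s'$ unused vertices remain in $X_1$; the case where $Y_1$ is exhausted first is symmetric, with the colours swapped.

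\textbf{Step 2: a provisional cluster $T'$.} Set $T'=X_1\cup V(\cR')\cup V(\cB')$. In $G[T']$ the red tiling is $\cR'$. The blue tiling consists of $\cB'$ together with those blue tiles of $X_1$ that avoid $V(\cB')$. Each blue copy in $\cB'$ destroys at most $k-\alpha$ blue tiles of $X_1$, since at least $\alpha$ of its vertices lie in $Y_1$. Hence the blue count is at least $s/(2k)-(k-\alpha-1)b$.

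Every vertex of $X_1$ except at most $s'$ lies in some copy of $\cR'\cup\cB'$. A red copy contributes at least $\alpha$ vertices to $X_1$ and at most $k-\alpha$ outside it, so each red copy costs at most $k-\alpha$ vertices beyond $X_1$. The aim is then a weighted inequality of the form
\[
|T'| \le (2k-\alpha)\cdot\min\{\text{red count},\text{blue count}\}+ (\text{padding needed})\cdot k + O(s'),
\]
obtained by charging each vertex of $T'$ to a tile of each colour. When $b=0$ this reproduces the computation in \Cref{subsec:2.3}.

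\textbf{Step 3: padding and the error term.} Add whole tiles from the reserves to balance the colours: red tiles from $Y_2$ when red is short, blue tiles from $X_2$ when blue is short. Each reserve tile adds $k$ vertices and one copy of its colour, and is disjoint from $T'$. The padding needed is at most $s/(2k)$, which fits in the reserve. This yields a set $T$ in which both colour counts are at least $|T|/(2k-\alpha)-O(s')$.

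The accumulated error is $O(k\,s')=O(\eta^2 s)$. Since $|T|\ge |X_1|=s/2$, this error is at most $\eta |T|$ once $\eta$ is small relative to $1/k^2$ (for larger $\eta$ one can shrink it first). The hypothesis $k\ge 3$ guarantees $2k-\alpha>k$, which is what makes the red-copy accounting favourable.

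\textbf{Main obstacle.} The hardest part is Step 2 in the presence of the mixed blue copies $\cB'$. These consume vertices of $X_1$ without contributing the $\alpha$-overlap that pays for red copies, and they also break up blue tiles of $X_1$. My first attempt would be to set $T'=X_1\cup V(\cR')$ and discard $\cB'$, keeping the blue tiling of $X_1$ intact. This works directly when $\cB'$ covers at most an $\eta^2$-fraction of $X_1$. Otherwise, I would exploit that $\cB'$ has at least $\alpha b$ vertices in $Y_1$ and play the symmetric argument on the $Y$ side. Concretely, I would choose between $X_1\cup V(\cR')$ and $Y_1\cup V(\cB')$, whichever has the better vertex-to-copy ratio, and show by averaging that at least one of them satisfies the required inequality before padding.
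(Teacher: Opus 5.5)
\textbf{There is a genuine gap: the step that turns the greedy output into a cluster is missing, and what you commit to in Steps~2--3 fails as written.} Your setup (the halves $X_1,X_2,Y_1,Y_2$ and the greedy process driven by $(H,\eta^2 s)$-richness) is the same as the paper's.

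\emph{Step 3 can need more padding than the reserve holds.} With $T'=X_1\cup V(\cR')\cup V(\cB')$, the claim that at most $s/(2k)$ reserve tiles are needed is false. Take $H=K_3$ and suppose every greedy copy is blue, with two vertices in $X_1$ and one in $Y_1$. The process stops after about $s/4$ steps with $r=0$. Your bookkeeping then gives about $s/4$ blue copies in $G[T']$ and no red ones. Even after using all of $Y_2$ you have at most $s/6$ copies of each colour on about $5s/4$ vertices, far below ratio $1/5$.

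\emph{Your target inequality has the wrong coefficient.} Padding by $p$ tiles adds $kp$ vertices and raises the minimum by $p$. So you need $|T'|\le(2k-\alpha)\min+(k-\alpha)p+O(s')$. With coefficient $k$ you only get $|T|\le(2k-\alpha)\min+2kp+O(s')$ after padding, which is not enough.

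\emph{The fallback is the right object but is not argued.} In your last paragraph, $X_1\cup V(\cR')$ is the correct set. However, the condition you attach to it ($\cB'$ covering at most an $\eta^2$-fraction of $X_1$) is far too restrictive. ``Pick the better pre-padding ratio and average'' is not an argument. Since padding changes the ratios, comparing them beforehand does not even identify which candidate works.

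\textbf{The missing idea is to compare $r$ and $b$ with $s/(2k)$.}
\begin{itemize}
\item If $r\ge s/(2k)$, take $X_1$ together with the $Y_1$-vertices of the first $s/(2k)$ red copies. This set has at most $s/2+(k-\alpha)s/(2k)=(2k-\alpha)s/(2k)$ vertices and carries $s/(2k)$ disjoint copies of each colour, so it is an $(H,0)$-cluster.
\item If $b\ge s/(2k)$, argue symmetrically, using $Y_1$ and the $X_1$-vertices of the first $s/(2k)$ blue copies.
\item If $r,b<s/(2k)$ and $X_1$ is the side that ran out, discard $\cB'$ altogether. Take $T=X_1\cup V(\cR')\cup Z$, where $Z\subset Y_2$ spans $s/(2k)-r$ red tiles.
\end{itemize}
For the last case, write $a_R,a_B$ for the numbers of vertices of $X_1$ covered by $\cR'$ and $\cB'$. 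Then $|T|=s-a_R$. The stopping rule gives $a_R+a_B>s/2-\eta^2 s$, and $a_B\le(k-\alpha)b<(k-\alpha)s/(2k)$. Hence $|T|<(2k-\alpha)s/(2k)+\eta^2 s$, with $s/(2k)$ copies of each colour. The blue copies never need to be counted: they only consume at most $(k-\alpha)b$ vertices of $X_1$, and $b<s/(2k)$ is exactly what makes this affordable. This is how the paper finishes.

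\textbf{You also should not shrink $\eta$.} The error is only the additive $\eta^2 s$, and $1/(2k-\alpha+2k\eta^2)\ge 1/(2k-\alpha)-\eta$ holds for every $0<\eta<1$; the case $\eta\ge1$ is trivial. So no shrinking is needed. You could not do it anyway, because $(H,\eta^2 s)$-richness does not imply $(H,\eta'^2 s)$-richness for $\eta'<\eta$.
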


Several additional cases must be considered, and these are analyzed in full detail in the proof of \Cref{lem:generalHtie} given in \Cref{sec:3}. We emphasize that, in our proof, it is crucial to control the size of the tilings, which explains the need for the error term $\eta$ in the definition of a cluster.

Assuming \Cref{lem:generalHtie}, the proof of \Cref{thm:Hlower} is similar to the proof of \Cref{thm:trianglelower}.

\begin{proof}[Proof of \Cref{thm:Hlower} assuming \Cref{thm:Htrans} and \Cref{lem:generalHtie}]

First we remark that the result is trivial when $H$ has no edges. Moreover, if the statement can be proved for $H = 2K_2$, then the case $H = K_2$ follows immediately, since $\max \{m_2(K_2),1\} = \max \{m_2(2K_2),1\} = 1$. Hence we may assume that $k\ge 3$.

Given $\varepsilon>0$ and $\eta=\varepsilon/(2k-\alpha)$, let $C$ be sufficiently large and let $G \sim \mathbb{G}(n,p)$ for some $p \ge Cn^{-1/\max\{m_2(H),1\}}$. By \Cref{thm:Htrans} and \Cref{lem:Hsup}, with high probability $G$ is $(H,\eta^2\varepsilon n)$-rich and for every $2$-colouring of $G$ and every $U\subset V(G)$ with $|U|\ge \varepsilon n$, there exists a monochromatic copy of $H$ in $G[U]$.

For a fixed $2$-colouring of the edges of $G$, let $\mathcal{T}=\{G[T_1], G[T_2],\dots, G[T_\ell]\}$
be a maximal collection of vertex-disjoint $(H,\eta)$-clusters in $G$ and let $T = T_1\cup T_2\cup\dots\cup T_\ell$ be the set of vertices covered by this collection. If $|T|\geq n-2\varepsilon n$, since each $G[T_i]$ is an $(H,\eta)$-cluster, we can find a red and a blue $H$-tiling in $G[T]$, both consisting of at least
\begin{equation*}
\Big(\frac{1}{2k-\alpha}-\eta\Big)|T|
\geq \frac{n}{2k-\alpha}-\varepsilon n
\end{equation*}
copies of $H$, where the inequality holds by our choice of $\eta$ and $k \ge 3$.

Therefore, we may assume that $|T|\leq n-2\varepsilon n$. Iteratively applying \Cref{lem:Hsup}, we find $(n-|T|-\varepsilon n)/k$ disjoint monochromatic copies of $H$ in $V(G)\setminus T$. Furthermore, observe that we may assume that all but at most $\varepsilon n/k$ of these copies of $H$ must have the same colour. Indeed, if there are at least $\varepsilon n/k$ monochromatic copies of $H$ in each colour, since $G$ is $(H,\eta^2\varepsilon n)$-rich, we can apply \Cref{lem:generalHtie} with $s=\varepsilon n$, where $X$ and $Y$ are the (disjoint) vertex sets of $\varepsilon n/k$ blue copies of $H$ and $\varepsilon n/k$ red copies of $H$, respectively. This yields an additional $(H,\eta)$-cluster in $G$, contradicting the maximality of $\mathcal{T}$. Hence we can find a monochromatic $H$-tiling with at least
\begin{equation*}
    \Big(\frac{1}{2k-\alpha}-\eta\Big)|T| + \frac{n-|T|-\varepsilon n}{k} - \frac{\varepsilon n}{k} \ge \frac{n}{2k-\alpha}-\varepsilon n 
\end{equation*}
copies of $H$, where the inequality is true since $|T| \le n-2\varepsilon n$, by our choice of $\eta$, and $k \ge 3$.
\end{proof}

\subsection{A container lemma}\label{subsec:containers} The proof of \Cref{thm:Htrans} relies on the method of hypergraph containers and follows the general framework used in most applications of the method. We first define a structure that will play an important role in the proof.

\begin{restatable}{definition}{ABGoodCopy}
\label{def:ABgood}
Given a partition $V(K_n)=A\cup B$ and a $2$-colouring $\chi\colon E(K_n) \to \{b,r\}$, we say that a $2$-coloured graph $H' \subset K_n$ is an \emph{$(A,B)$-good copy of $H$ in $\chi$} if $H'$ is isomorphic to $H$ and one of the following holds: \begin{enumerate}[label = (\roman*)]
\item\label{item:good1} $H'$ is red and $|V(H') \cap A| \ge \alpha(H)$;
\item\label{item:good2} $H'$ is blue and $|V(H') \cap B| \ge \alpha(H)$.
\end{enumerate}
\end{restatable}
When the $2$-colouring $\chi$ is clear from context, we shall refer to an $(A,B)$-good copy of $H$ in $\chi$ simply as an $(A,B)$-good copy of $H$. Note that this is a very natural object to study given our definition of $(H,s)$-rich graphs. Indeed, to show that a graph $G\sim\mathbb{G}(n,p)$ is $(H,\varepsilon n)$-rich with high probability for the range of $p$ we are interested in, it suffices to bound the probability that there exist disjoint $\varepsilon n$-sets of vertices $X$ and $Y$ and a $2$-colouring $\chi$ of $G'=G[X \cup Y]$ such that there is no $(X,Y)$-good copy in $\chi$ of $H$ in $G'$.

Before proceeding, we introduce two definitions. A \emph{balanced partition} $V(K_n)=A\cup B$ is a partition satisfying $|A|=|B|$. A $[2]$-\emph{coloured graph} $G$ is a graph together with a function $\chi\colon E(G) \to 2^{\{b,r\}}$ that assigns a non-empty subset of $\{b,r\}$ to each edge. Recall that when every edge of $G$ is assigned to one of the sets $\{b\}$ or $\{r\}$, then we simply have a $2$-colouring of the edges of $G$, and we say that the graph $G$ is $2$-coloured. Observe that a \mbox{$[2]$-coloured} graph can be viewed as a subset of $E(K_n) \times \{b,r\}$ by associating $G$ with the set
\[G_{\chi}\coloneqq\big\{(e,c) \colon e \in E(G), c \in \chi(e)\big\} \subset E(K_n)\times\{b,r\}.\]

To bound the above probability, for each balanced partition $V(K_n)=A\cup B$, we establish the following container lemma for $2$-coloured graphs with no $(A,B)$-good copies of $H$. 

\begin{restatable}{lemma}{ABcontainer}
\label{lem:ABcontainer}
For every graph $H$ there exist $\delta>0$ and $C>0$ such that the following holds. For every $n \in \mathbb{N}$ and every balanced partition $V(K_n) = A \cup B$, there exist a collection $\cG$ of $[2]$-coloured graphs on vertex set $V(K_n)$, and a function $h\colon 2^{E(K_n) \times \{r,b\}} \to \cG$ such that:
\begin{enumerate}[label=\textnormal{(\roman*)}]
\item\label{item:ABcontainer1} each $G \in \cG$ has at most $(1-\delta)\binom{n}{2}$ edges;
\item\label{item:ABcontainer2} for every $2$-coloured graph $G$ on vertex set $V(K_n)$ with no $(A,B)$-good copies of $H$, there exists a $2$-coloured subgraph $S \subset G$ satisfying
\[e(S) \le Cn^{2-1/\max\{m_2(H),1\}}\quad\text{and}\quad G \subset h(S).\]
\end{enumerate}
\end{restatable}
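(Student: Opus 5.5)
The plan is to follow the standard container framework of Nenadov and Steger: first establish a supersaturation lemma (this is \Cref{lem:ABsuper}), then feed it into the hypergraph container theorem of Balogh, Morris, and Samotij / Saxton and Thomason. Everything is done for a fixed balanced partition $V(K_n)=A\cup B$.

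The ground set is $E(K_n)\times\{r,b\}$. Let $\cH$ be the $2e(H)$-uniform hypergraph whose hyperedges are the sets $\{(e,r)\colon e\in E(H')\}$, for copies $H'$ of $H$ with $|V(H')\cap A|\ge\alpha(H)$, together with the sets $\{(e,b)\colon e\in E(H')\}$, for copies with $|V(H')\cap B|\ge\alpha(H)$. Strictly the uniformity is $e(H)$, but red and blue hyperedges live on disjoint parts of the ground set. A $2$-coloured graph with no $(A,B)$-good copy of $H$ is then exactly an independent set of $\cH$.

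\textbf{Supersaturation.} The supersaturation statement I aim for reads as follows. There exist $\delta,\beta>0$ such that every $[2]$-coloured graph $G$ on $V(K_n)$ with at least $(1-\delta)\binom n2$ edges, i.e. in which at most $\delta\binom n2$ pairs are uncovered, contains at least $\beta n^{v(H)}$ good copies of $H$. The counting is over $G_\chi$.

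To prove it, note that at least $(1-\delta)\binom n2$ pairs carry at least one colour. Among these, either many pairs inside $A$ or across $A$–$B$ carry red, or the complement structure forces blue.

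I would argue via Ramsey/Turán-type counting on $K_{|A|,|B|}$ plus the inside parts. Suppose red has density at least $1/2$ on a positive proportion of the right regions. By the supersaturation version of Ramsey's theorem (Erdős–Simonovits counting on a nearly complete graph), we obtain $\Omega(n^{k})$ monochromatic copies of $H$. Each such copy can be placed with its maximum independent set inside $A$ for red (respectively inside $B$ for blue).

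Concretely, I would embed the independent set of size $\alpha$ into $A$ and the rest anywhere. Then I apply a Ramsey argument to an auxiliary nearly complete structure: colour each pair by a colour it contains and use multipartite Ramsey counting. This yields many copies of a blow-up in which some colour works. Choosing the independent set placement according to the colour found gives good copies.

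\textbf{Container application.} Next I verify the codegree conditions, which are standard. For $\ell$ edges of a set $L\subset H$ spanning a subgraph $F$, the codegree in $\cH$ is at most $n^{v(H)-v(F)}$. Running the Saxton–Thomason container lemma with parameter $\tau\approx n^{-1/m_2(H)}$ uses the definition of $m_2$ exactly as in Nenadov–Steger.

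When $m_2(H)<1$, i.e. $H$ is a matching, I take $\tau=n^{-1}$ instead. This works because the codegree bounds are then dominated by single edges, which gives the $\max\{m_2(H),1\}$ exponent.

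Iterating the container lemma with supersaturation yields fingerprints $S$ with $e(S)\le Cn^{2}\tau$, and a map $h$ to containers that are $[2]$-coloured graphs with few good copies. By the contrapositive of supersaturation, each container has at most $(1-\delta)\binom n2$ edges, giving \ref{item:ABcontainer1} and \ref{item:ABcontainer2}.

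\textbf{Main obstacle.} I expect the supersaturation step to be the hard part. Guaranteeing that the monochromatic copy has the correct colour relative to $A$ versus $B$ needs a careful case analysis of where red and blue dominate. For example, a colouring that is all blue inside $A$ and all red inside $B$ must still be handled, through cross edges and independent sets placed appropriately.
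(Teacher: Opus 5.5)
Your proposal is correct and follows the paper's route: the same hypergraph on $E(K_n)\times\{r,b\}$ whose edges are the coloured edge sets of $(A,B)$-good copies, the same codegree bounds via the definition of $m_2(H)$ with $\tau=n^{-1/\max\{m_2(H),1\}}$, and containers with at most $(1-\delta)\binom{n}{2}$ edges by the contrapositive of supersaturation. Your $[2]$-coloured supersaturation statement follows from \Cref{lem:ABsuper} by choosing one available colour on each edge. (In the matching case, the real reason taking $\tau=n^{-1}$ is harmless is that a larger $\tau$ only weakens the codegree conditions.) You should simply cite \Cref{lem:ABsuper}, because your own sketch of it does not work as written: Ramsey counting alone gives no control over where a monochromatic copy sits relative to $A$ and $B$ (blue copies inside $A$, for instance, are not good). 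The missing idea is the paper's: use $R(k,s)$ inside each part to reduce to many blue $K_s$ in $A$ and red $K_s$ in $B$, then K\H{o}v\'ari--S\'os--Tur\'an to find a monochromatic $K_{k,k}$ between such a pair, which gives a good copy in either colour.
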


The proof of \Cref{lem:ABcontainer} relies on the general container machinery from~\cite{BMS,ST} and is given in \Cref{sec:5}. The deduction of \Cref{thm:Htrans} from \Cref{lem:ABcontainer} is straightforward and follows the general framework of Nenadov and Steger~\cite{NS}.

\begin{proof}[Proof of \Cref{thm:Htrans} assuming \Cref{lem:ABcontainer}] 

For a fixed $\varepsilon > 0$, let $C'=C'(\varepsilon) > 0$ be sufficiently large, and let $G \sim \mathbb{G}(n,p)$, where $p \ge C' n^{-1/\max\{m_2(H),1\}}$. 
For any pair of disjoint sets $X, Y \subset V(G)$, let $\cG_{\text{bad}}(X,Y)$ be the family of all graphs on vertex set $X\cup Y$ that admit a $2$-colouring of its edges with no $(X,Y)$-good copy of $H$.  Crucially, if $G$ fails to be $(H, \varepsilon n)$-rich, then $G[X \cup Y] \in \mathcal{G}_{\text{bad}}(X,Y)$ for some $\varepsilon n$-sets $X,Y \subset V(G)$.

Fix disjoint $\varepsilon n$-sets $X, Y \subset V(G)$ and let $G' = G[X \cup Y]$. Set $N = 2\varepsilon n$ and observe that $G' \sim \mathbb{G}(N,p)$. To prove that $G$ is $(H, \varepsilon n)$-rich with high probability, our goal is to apply \Cref{lem:ABcontainer} to $G'$ and show that the probability that $G'$ belongs to $\cG_{\text{bad}}$ is at most $\exp(-cpN^2)$ for some $c>0$. We then conclude the argument by taking a union bound over all choices of disjoint $\varepsilon n$-sets $X,Y \subseteq V(G)$.

Let $\delta>0$ and $C>0$ be given by \Cref{lem:ABcontainer} applied to the graph $H$. Let $\cG$ be the collection of $[2]$-coloured graphs on vertex set $X \cup Y$ and $h\colon 2^{(X \cup Y)^{(2)} \times \{r,b\}} \to \cG$ be the function, both given by \Cref{lem:ABcontainer}. Then, if $G'\in \cG_{\text{bad}}(X,Y)$, one can fix a $2$-colouring of $G'$ with no $(X,Y)$-good copies of $H$. Setting $\lambda=C/C'$, by \cref{item:ABcontainer1,item:ABcontainer2} in \Cref{lem:ABcontainer}, there exists a $2$-coloured subgraph $S \subset G'$ satisfying
\begin{equation*}
e(S) \le CN^{2-1/\max\{m_2(H),1\}}\le \lambda pN^2,\quad G' \subset h(S),\quad\text{and}\quad e\big(h(S)\big)\leq(1-\delta)\binom{N}{2}.
\end{equation*}

Let $\cS$ be the family of subgraphs $S \subset K_N$ with at most $\lambda pN^2$ edges. The observations above imply that
\begin{equation*}
\Prob(G' \in \cG_{\text{bad}}(X,Y)) \le \sum_{S \in \cS}\Prob(S \subset G' \subset h(S)) 
\leq \sum_{S\in\cS}p^{e(S)}(1-p)^{e(\overline{h(S)})}\leq e^{-\delta p\binom{N}{2}}\sum_{s=1}^{\lambda pN^2}\binom{N^2}{s}p^s.
\end{equation*}
Since $C'$ is sufficiently large, the last term above can be bounded as
\[
\sum_{s=1}^{\lambda pN^2}\binom{N^2}{s}p^s\leq\sum_{s=1}^{\lambda pN^2} \bigg(\frac{epN^2}{s}\bigg)^{s}\leq N^2\bigg(\frac{e}{\lambda}\bigg)^{\lambda pN^2}\le N^2 e^{\delta pN^2/4}. 
\]
We then obtain
\[
\Prob(G' \in \cG_{\text{bad}}(X,Y)) \leq N^2e^{-\delta pN^2/8} \leq e^{-\delta pN^2/16}.
\]

Hence, taking a union bound over all $\varepsilon n$-sets $X,Y \subset V(G)$, we conclude that
\[\Pr\big(\text{$G$ is not $(H,\varepsilon n)$-rich} \big) \le \binom{n}{\varepsilon n}^2e^{-\delta pN^2/16} \le 4^ne^{-\delta p(2\varepsilon n)^2/16} = o(1),\]
since $\max\{m_2(H),1\} \ge 1$, $C'$ is sufficiently large and $p \ge C'n^{-1/\max\{m_2(H),1\}}$.
\end{proof}

\subsection{Supersaturation for $(A,B)$-good copies of $H$} The proof of \Cref{lem:ABcontainer} relies, as usual in most applications of hypergraph containers, on a supersaturation result. Precisely, we prove the following supersaturation result for $(A,B)$-good copies of $H$.

\begin{restatable}{lemma}{supersatforABGoodcopies}
\label{lem:ABsuper}
For every graph $H$ on $k$ vertices, there exists a constant $\eta=\eta(H)>0$ such that the following holds. Let $G\subset K_n$ be a graph on $n$ vertices with
\[e(G) \geq (1-\eta)\binom{n}{2}.\]
Then, for every balanced partition $V(K_n) = A \cup B$ and every $2$-colouring of the edges of $G$, there exist at least $\eta n^k$ $(A,B)$-good copies of $H$ in $G$.
\end{restatable}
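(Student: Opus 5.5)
We prove \Cref{lem:ABsuper} by supersaturation from a fixed-size Ramsey-type statement. The approach has three steps.

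\textbf{Step 1: a finite statement.} We first show there is an integer $L=L(H)$ with the following property. For every $2$-colouring of the edges of $K_{2L}$ and every vertex partition $A'\cup B'$ with $|A'|=|B'|=L$, there is an $(A',B')$-good copy of $H$. To find it, set $\alpha=\alpha(H)$ and apply Ramsey's theorem inside $A'$ with $L\ge R(K_{k})$, where $R(\cdot)$ is the two-colour Ramsey number. This gives a monochromatic clique $Q\subset A'$ on $k$ vertices.
\begin{enumerate}[label=\textnormal{(\alph*)}]
\item If $Q$ is red, it contains a red copy of $H$ with all $k\ge\alpha$ vertices in $A'$, which is good by \cref{item:good1}.
\item If $Q$ is blue, we switch to $B'$ and get a monochromatic clique $Q'\subset B'$ on $k$ vertices. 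A blue $Q'$ contains a good copy by \cref{item:good2}.
\item It remains to handle $Q$ blue and $Q'$ red. Here we use a larger $L$ so that the cliques have size $t\gg k$. We then apply Ramsey's theorem for bipartite graphs to the edges between $Q$ and $Q'$. This yields $Q_0\subset Q$ and $Q_0'\subset Q'$, each of size $k$, with all cross edges of one colour.
\end{enumerate}
In case (c), suppose the cross edges are red. Take an independent set $I$ of size $\alpha$ in $H$ and map $I$ into $Q_0$ and $V(H)\setminus I$ into $Q_0'$. Every edge of $H$ then lies inside $Q_0'$ or between $Q_0$ and $Q_0'$, so it is red. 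This gives a red copy of $H$ meeting $A'$ in $\alpha$ vertices, which is good. If the cross edges are blue, the symmetric embedding places $I$ into $Q_0'$ and the rest into $Q_0$, giving a blue copy meeting $B'$ in $\alpha$ vertices. Thus such an $L$ exists, and it depends only on $H$.

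\textbf{Step 2: averaging.} Let $G$ and $\chi$ be as in \Cref{lem:ABsuper}. Colour the non-edges of $G$ with a third, "missing" colour. Pick uniformly random $L$-sets $A'\subset A$ and $B'\subset B$, which is possible since $|A|=|B|=n/2$.

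Each pair inside $A'\cup B'$ is a non-edge with probability at most about $\eta\binom{n}{2}/\binom{n/2}{2}\approx 4\eta$. By a union bound over the $\binom{2L}{2}$ pairs, with probability at least $1-4\eta\binom{2L}{2}\cdot(1+o(1))\ge 1/2$ the set $A'\cup B'$ spans a complete graph in $G$. This holds if we choose $\eta\le 1/(20L^2)$. On that event, Step 1 gives an $(A',B')$-good copy of $H$. Any such copy is also $(A,B)$-good, since $A'\subset A$ and $B'\subset B$.

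\textbf{Step 3: counting.} Each fixed copy of $H$ on $k$ vertices lies in $A'\cup B'$ with probability at most $c_L n^{-k}$. So the expected number of good copies inside $A'\cup B'$ is at most $c_L n^{-k}$ times the total number of good copies. By Step 2 this expectation is at least $1/2$. Hence the number of good copies is at least $n^k/(2c_L)$, which is at least $\eta n^k$ after shrinking $\eta$; small $n$ is handled by adjusting constants.

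The main obstacle is case (c) of Step 1: the clique in $A$ has the wrong colour (blue), the clique in $B$ has the wrong colour (red), and the cross edges must be used to build a good copy. The bipartite Ramsey step together with placing a maximum independent set of $H$ on the appropriate side resolves it. The averaging and counting in Steps 2 and 3 are routine.
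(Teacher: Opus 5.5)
Your argument is correct, and its combinatorial core matches the paper's. You find monochromatic cliques in $A$ and in $B$, note that only the case ``blue clique in $A$, red clique in $B$'' is nontrivial, and resolve it with a monochromatic $K_{k,k}$ between the two cliques. You then embed $H$ with a maximum independent set on the appropriate side.

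The difference is in how you obtain $\eta n^k$ copies. The paper works inside $G$ in stages:
\begin{enumerate}
\item Lov\'asz--Simonovits gives $\Omega(n^R)$ copies of $K_R$ in $G[A]$ and in $G[B]$, with $R=R(k,s)$.
\item Counting turns these into $\Omega(n^s)$ blue $K_s$'s in $A$ and red $K_s$'s in $B$; otherwise there are already many red copies of $H$ in $A$ or blue copies in $B$.
\item Counting non-edges gives $\Omega(n^{2s})$ pairs $(U,W)$ with $e(G[U,W])\ge s^2/2$.
\item K\H{o}v\'ari--S\'os--Tur\'an, via the choice $s^2>4\operatorname{ex}(2s,K_{k,k})$, yields a monochromatic $K_{k,k}$ in these dense but not necessarily complete bipartite graphs.
\item A final double count finishes.
\end{enumerate}

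You instead prove a single finite statement: every $2$-coloured complete graph $K_{2L}$ with a balanced partition contains a good copy. You transfer it to $G$ by sampling random $L$-sets $A'\subset A$ and $B'\subset B$. Since the sample spans no non-edge with probability at least $1/2$, missing edges are dealt with once by a union bound. You therefore need only ordinary and bipartite Ramsey numbers for complete graphs, and neither Lov\'asz--Simonovits nor a Tur\'an-type bound.

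This Erd\H{o}s--Simonovits-style averaging is shorter and more modular. It converts any ``every balanced $2$-coloured $K_{2L}$ contains a good copy'' statement into supersaturation. The paper's staged count stays closer to the Burr--Erd\H{o}s--Spencer argument it adapts. For a cleaner write-up, fix $L=R(t,t)$ from the start, where $t$ is the bipartite Ramsey number of $K_{k,k}$, rather than first taking $L\ge R(K_k)$ and enlarging it later.

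One small correction: small $n$ cannot be absorbed by adjusting constants. For $n<k$ there are no copies of $H$ at all, so the lemma, like the paper's proof, is really a statement for $n\ge n_0(H)$.
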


To obtain this result, we adapt the approach of Burr, Erd\H{o}s, and Spencer \cite{BES}. We remark that in their process, they show how to find $H$-ties in the complete graph under certain conditions on the colouring. We conclude this section by sketching their method and comparing it with our approach to prove \Cref{lem:ABsuper}. Further details can be found in \Cref{sec:4}.

Let $k=v(H)$, and let $s$ be a sufficiently large integer depending only on $k$. In their proof, Burr, Erd\H{o}s, and Spencer \cite{BES} show by induction on $m$ that $R(K_s,mH)$ is bounded above by a linear function in $m$. Consequently, for a sufficiently large number of vertices, still linear in $m$, we may, by the symmetry of Ramsey numbers, either find monochromatic copies of $K_s$ in both colours or obtain a monochromatic $H$-tiling of the desired size. In the former case, if $X$ and $Y$ are the vertex sets of the two monochromatic copies of $K_s$ in each colour, to find an $H$-tie, it suffices to find a monochromatic $K_{k,k}$ in the complete bipartite graph $G[X,Y]$. Indeed, suppose without loss of generality that the copy of $K_s$ in $X$ is blue, and that both the copy of $K_s$ in $Y$ and the copy $C$ of $K_{k,k}$ in $G[X,Y]$ are red. The $H$-tie we can find contains a blue copy of $H$ lying entirely in $V(C)\cap X$ while its red copy of $H$ shares an independent set of size $\alpha =\alpha(H)$ with its blue copy of $H$ and has all remaining vertices lying in $V(C)\cap Y$. The choice of $s$ ensures the existence of such a monochromatic complete bipartite graph.

To prove \Cref{lem:ABsuper} we must find several $(A,B)$-good copies of $H$ in $G$ in each $2$-colouring of the edges of $G$. That is, we want many red copies of $H$ each intersecting $A$ in at least $\alpha$ vertices, or many blue copies of $H$ each intersecting $B$ in at least $\alpha$ vertices.

In \Cref{lem:ABsuper}, the graph $G$ is not complete, but a very dense subgraph of the complete graph. In this case, we proceed as follows. For $R = R(k,s)$, classical supersaturation results imply that we have $\Omega(n^R)$ copies of $K_R$ in both $G[A]$ and $G[B]$. In every $2$-colouring of the edges of $G$, each of the $R$-cliques in $A$ then yields a red copy of $K_k$ or a blue copy of $K_s$. The former scenario gives $(A,B)$-good copies of $H$ for free and hence cannot occur too many times, otherwise we have the desired conclusion. In the latter case, a standard counting argument then allows us to assume that there are $\Omega(n^s)$ blue copies of $K_s$ in $A$. Similarly, we can assume that there are $\Omega(n^s)$ red copies of $K_s$ in $B$. The final step is to show that there are $\Omega(n^{2s})$ pairs $(U,V)$, where $U$ is the vertex set of a blue copy of $K_s$ in $A$ and $V$ is the vertex set of a red copy of $K_s$ in $B$, such that the bipartite graph $G[U,V]$ is sufficiently dense to guarantee a monochromatic copy of $K_{k,k}$ as a subgraph. All such monochromatic complete bipartite graphs yield $(A,B)$-good copies of $H$.

\section{Proof of \Cref{lem:generalHtie}}\label{sec:3} 

The purpose of this section is to formalize the outline given in \Cref{sec:2} and prove \Cref{lem:generalHtie}, restated below.

\generalHtie*

We briefly recall the ideas in the proof discussed in \Cref{subsec:2.3}. The first step is to partition $X=X_1 \cup X_2$ and $Y=Y_1 \cup Y_2$ such that each $X_i$ has a blue $H$-tiling of size $s/(2k)$ and each $Y_i$ has a red $H$-tiling of size $s/(2k)$. Then we greedily select many disjoint monochromatic copies of $H$ in $X_1 \cup Y_1$ in each of the two colours using the fact that $G$ is $(H,\eta^2s)$-rich. If among these new copies of $H$ and the ones given on the $H$-tilings of $X_1$ and $Y_1$ we can already find an $(H,\eta)$-cluster, we are done. Otherwise, we use some of the stored copies of $H$ in $X_2$ and $Y_2$ to complete an $(H,\eta)$-cluster.

We can now proceed to the proof of \Cref{lem:generalHtie}.

\begin{proof}[Proof of \Cref{lem:generalHtie}] If $\eta \ge 1$, then $1/(2k-\alpha)<\eta$, since $k\ge 3$ and $k \ge \alpha$. Therefore, in this case any nonempty $2$-coloured graph is an $(H,\eta)$-cluster. Thus we can assume $\eta<1$. Fix a $2$-colouring of the edges of $G$ and let $X,Y \subset V(G)$ be $s$-sets satisfying conditions \ref{item:3.5.i} and \ref{item:3.5.ii} in \Cref{lem:generalHtie}. Then, fix partitions $X=X_1 \cup X_2$ and $Y=Y_1 \cup Y_2$, with $|X_1|=|X_2|=|Y_1|=|Y_2|=s/2$ such that each $X_i$ has a blue $H$-tiling of size $s/(2k)$ and each $Y_i$ has a red $H$-tiling of size $s/(2k)$. Set $A_X = X_1$, $A_Y = Y_1$, $X^{*} = \emptyset$, $Y^{*} = \emptyset$ and $t_X = t_Y= 0$. We run the following process while $\min\{|A_X|,|A_Y|\}\ge \eta^2 s$:
\begin{enumerate}[topsep=6pt, partopsep=6pt, itemsep=4pt, parsep=4pt, label=\arabic*.]
    \item Arbitrarily choose $W \subset A_X$ and $U \subset A_Y$ with 
    $|W| = |U| = \eta^2 s$.

    \item If $G[W \cup U]$ contains a red copy $H'$ of $H$ such that $|V(H') \cap W| \ge \alpha$, then update
    \begin{alignat*}{3}
        A_X &\to A_X \setminus V(H'), \qquad &A_Y &\to A_Y \setminus V(H'), \\
        Y^{*} &\to Y^{*} \cup (V(H') \cap U),\qquad &t_X &\to t_X + 1,
    \end{alignat*}
    and go to Step~1.

    \item If $G[W \cup U]$ contains a blue copy $H'$ of $H$ such that $|V(H') \cap U| \ge \alpha$, then update
    \begin{alignat*}{3}
        A_X &\to A_X \setminus V(H'), \qquad &A_Y &\to A_Y \setminus V(H'), \\
        X^{*} &\to X^{*} \cup (V(H') \cap W),\qquad &t_Y &\to t_Y + 1,
    \end{alignat*}
    and go to Step~1.
\end{enumerate}

When the process stops, return the final tuple
\[(A_X, A_Y, X^*, Y^*, t_X, t_Y).\]

Since $G$ is $(H,\eta^2s)$-rich, while the process does not stop, one of the conditional steps $2$ and $3$ always runs.

Notice that at each step of type 2 that runs, the total number of vertices removed from $A_X \cup A_Y$ is exactly $k$. Moreover, $Y^*$ is precisely the set of vertices removed from $A_Y$ in all steps of type 2. Hence, considering all steps of type 2, the number of vertices removed from $A_X$ is exactly $kt_X-|Y^*|$. Furthermore, at each type 3 step, the number of vertices removed from $A_X$ is at most $k-\alpha$. 
Finally, notice that, without loss of generality, we can assume that when the process stops, we have $|A_X| < \eta^2 s$. The following inequality then holds:
\begin{equation}\label{eq:sumoftiX}
    kt_X - |Y^*| +  (k-\alpha)t_Y  \ge |X_1| - |A_X| > \frac{s}{2} - \eta^2s.
\end{equation}

We consider some cases according to the values of $t_X$ and $t_Y$. In each of these cases, we find a corresponding $(H,\eta)$-cluster.

If $t_X \ge s/(2k)$, let $\widetilde{Y}$ be the set of vertices added to $Y^*$ in the first $s/(2k)$ instances that step~2 ran in the process above. Define $T = X_1 \cup \widetilde{Y}$ and notice that
\[|T| \le |X_1| + \frac{s}{2k}(k-\alpha) = \frac{s}{2k}  (2k-\alpha).\]
Next, note that $G[T]$ contains both a red and a blue $H$-tiling consisting of $s / (2k)$ copies of $H$. Hence, by definition $G[T]$ forms an $(H,0)$-cluster.

If $t_Y \ge s/(2k)$, let $\widetilde{X}$ be the set of vertices added to $X^*$ in the first $s/(2k)$ instances that step~3 ran in the process above. Define $T = Y_1 \cup \widetilde{X}$ and similarly to the previous case, $G[T]$ forms an $(H,0)$-cluster.

Otherwise, we have that $t_X,t_Y < s/(2k)$. Let $Z\subset Y_2$ be the vertex set of  $s/(2k)-t_X$ disjoint red copies of $H$ in $Y_2$. Define $T = X_1 \cup Y^* \cup Z$ and notice that
\[
|T| = \frac{s}{2} + |Y^*| + \Big(\frac{s}{2k} - t_X\Big)k = s + |Y^*| - k t_X.
\]
Thus, by \eqref{eq:sumoftiX} and by the fact that $k \ge \alpha$ and $t_Y < s / (2k)$, we have 
\begin{equation}\label{eq:sizeOfT}
|T| = s + |Y^*| - kt_X < (k-\alpha)t_Y + \frac{s}{2} + \eta^2s \le s - \frac{\alpha s}{2k} + \eta^2s.
\end{equation}

In this case, observe that $G[T]$ contains a blue $H$-tiling with $s/(2k)$ copies of $H$, since $X_1 \subset T$. At the same time, it contains a red $H$-tiling with
\[
t_X + \Big(\frac{s}{2k} - t_X\Big) = \frac{s}{2k}
\]
copies of $H$, those whose existence induced each step $2$ and those originating from $Z$. Therefore, by \eqref{eq:sizeOfT}, the ratio between the size of a maximal monochromatic $H$-tiling in each colour in $G[T]$ and $|T|$ is at least
\[
\frac{s / (2k)}{|T|}
> \frac{s / (2k)}{s - \frac{\alpha s}{2k} + \eta^2s} = \frac{1}{2k - \alpha + 2k\eta^2}.
\]
Finally, since $0< \eta < 1$ and $k \ge 3$, we have
\[\frac{1}{2k - \alpha + 2k\eta^2}
\ge \frac{1}{2k - \alpha} - \eta,
\]
which implies that $G[T]$ is an $(H,\eta)$-cluster.
\end{proof}

\section{Proof of \Cref{lem:ABsuper}}\label{sec:4}

In this section, we prove \Cref{lem:ABsuper}, the supersaturation result needed for the proof of \Cref{lem:ABcontainer}, as discussed in \Cref{sec:2}. We begin by restating \Cref{lem:ABsuper}.

\supersatforABGoodcopies*

Before proceeding, let us outline our strategy. Fix a partition $V(K_n) = A \cup B$ and a $2$-colouring of the edges of $G \subset K_n$. Our proof proceeds in two steps. First, we find $\Omega(n^{R})$ copies of $K_{R}$ in both $G[A]$ and $G[B]$, where $R = R(k,s)$ is the Ramsey number of $K_k$ and $K_s$ for a suitable choice of $s$. To achieve this, we invoke the following classical supersaturation result due to Lov\'asz and Simonovits~\cite{Lovasz-Simonovits}.

\begin{theorem}[Lov\'asz and Simonovits \cite{Lovasz-Simonovits}]\label{thm:lovasz-simonovits} Let $t \ge R\ge 3$ be integers and let $G$ be a graph on $n$ vertices satisfying
\[e(G)\ge \bigg(1 -\frac{1}{t}\bigg)\frac{n^2}{2}.\]
Then $G$ contains at least \[\binom{t}{R}\bigg(\frac{n}{t}\bigg)^R\] 
copies of $K_R$.    
\end{theorem}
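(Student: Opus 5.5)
This is the classical supersaturation theorem of Lov\'asz and Simonovits, and I would prove it via the Moon--Moser inequality on ratios of clique counts. The Tur\'an graph $T_t(n)$ is the natural extremal example: when $t \mid n$ it has exactly $(1-1/t)n^2/2$ edges and exactly $\binom{t}{R}(n/t)^R$ copies of $K_R$. So the aim is to show that each successive ratio of clique counts is at least the corresponding ratio in $T_t(n)$. Write $k_j$ for the number of copies of $K_j$ in $G$, so $k_1=n$ and $k_2=e(G)$. In $T_t(n)$ one has
\[
\frac{k_j}{k_{j-1}}=\frac{\binom{t}{j}}{\binom{t}{j-1}}\cdot\frac{n}{t}=\frac{(t-j+1)n}{jt}.
\]
I will prove by induction on $j$, for $2\le j\le R$, that
\[
\frac{k_j}{k_{j-1}} \ge \frac{(t-j+1)n}{jt},
\]
and then multiply these inequalities for $j=2,\dots,R$ starting from $k_1=n$. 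The product telescopes exactly to $k_R\ge\binom{t}{R}(n/t)^R$.

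\textbf{Step 1 (Moon--Moser).} I will show that for every $j\ge 2$ with $k_{j-1},k_j>0$,
\[
\frac{k_{j+1}}{k_j}\ \ge\ \frac{1}{j^2-1}\left(\frac{j^2k_j}{k_{j-1}}-n\right).
\]
For a clique $Q$, let $d(Q)$ be the number of vertices adjacent to every vertex of $Q$. Then $\sum_{|Q|=j}d(Q)=(j+1)k_{j+1}$ and $\sum_{|Q|=j-1}d(Q)=jk_j$.

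I will double count triples $(S,Q,v)$ where $S$ is a $j$-clique, $Q\subset S$ has $|Q|=j-1$, $v\notin S$, and $Q\cup\{v\}$ is a clique.
\begin{enumerate}[label=\textnormal{(\alph*)}]
\item Fixing $Q$: the extra vertex of $S$ and the vertex $v$ are two distinct common neighbours of $Q$, so the count is $d(Q)(d(Q)-1)$.
\item Fixing $S$ and $v$: if $v$ is adjacent to all of $S$, there are exactly $j$ valid choices of $Q$. Otherwise $v$ misses some vertex $u$ of $S$, and then $Q$ must equal $S\setminus\{u\}$; this is at most one choice, and it requires $u$ to be the unique non-neighbour.
\end{enumerate}
Hence the total is at most $\sum_S\bigl(j\,d(S)+(n-j-d(S))\bigr)=(j-1)(j+1)k_{j+1}+(n-j)k_j$.

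Comparing with (a) and applying Cauchy--Schwarz, $\sum_Q d(Q)^2\ge (jk_j)^2/k_{j-1}$, gives
\[
\frac{j^2k_j^2}{k_{j-1}}-jk_j \le (j^2-1)k_{j+1}+(n-j)k_j .
\]
Dividing by $k_j$, the $-j$ terms cancel and the inequality above follows.

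\textbf{Step 2 (induction).} The base case $j=2$ is precisely the edge hypothesis: $k_2/k_1\ge(1-1/t)n/2=(t-1)n/(2t)$.

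For the inductive step, substitute the hypothesis $k_j/k_{j-1}\ge (t-j+1)n/(jt)$ into Moon--Moser. This gives
\[
\frac{k_{j+1}}{k_j}\ge \frac{n\bigl(j(t-j+1)-t\bigr)}{t(j^2-1)}=\frac{n(j-1)(t-j)}{t(j-1)(j+1)}=\frac{(t-j)n}{(j+1)t},
\]
which is exactly the required bound at level $j+1$.

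Since $j<R\le t$, every bound in this chain is strictly positive. Therefore all $k_j>0$, which justifies the divisions by $k_{j-1}$ and $k_j$ made along the way.

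\textbf{Main obstacle.} The only delicate point is the combinatorial bound in Step 1(b): that a vertex $v$ outside $S$ contributes $j$ when it is complete to $S$ and at most $1$ otherwise. Once this is in place, everything else is Cauchy--Schwarz followed by the explicit algebra in Step 2, which I have checked closes exactly on the Tur\'an ratios.
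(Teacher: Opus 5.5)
Your proof is correct. The paper does not prove this statement at all: it imports it as a black box from Lov\'asz and Simonovits and uses it only once, in the proof of \Cref{lem:ABsuper}.

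\paragraph{Your route.} You take the standard Moon--Moser approach.
\begin{itemize}
\item The double count of triples $(S,Q,v)$ uses the contributions $j$ and $\le 1$ per outside vertex. Together with Cauchy--Schwarz this gives $k_{j+1}/k_j \ge \bigl(j^2k_j/k_{j-1}-n\bigr)/(j^2-1)$.
\item The identity $j(t-j+1)-t=(j-1)(t-j)$ shows that the Tur\'an ratios $(t-j+1)n/(jt)$ propagate exactly from the base case $k_2/k_1\ge (t-1)n/(2t)$.
\item Positivity of $t-j$ for $j<R\le t$ justifies every division by $k_{j-1}$ and $k_j$.
\end{itemize}

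\paragraph{What each approach buys.} The citation is shorter. Your argument makes this ingredient self-contained. It also shows that only the weak form of supersaturation, at exactly the Tur\'an density, is needed here; the cited work proves considerably finer statements about the minimum number of cliques for a given number of edges.

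\paragraph{A small bonus.} Nothing in your argument uses integrality of $t$. It works verbatim for any real $t>R-1$, reading $\binom{t}{R}$ as a generalized binomial coefficient. This fits the paper's application with $t=1/(10\eta)$, where $t$ need not be an integer unless $\eta$ is chosen accordingly. The paper's subsequent estimate $\binom{t}{R}\ge (t/R)^R$ also holds for real $t\ge R$.
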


In particular, the abundance of $K_{R}$ ensures that either $G[A]$ contains many red copies of $H$ (type~\ref{item:good1}) or many blue copies of $K_{s}$; symmetrically, either $G[B]$ contains many blue copies of $H$ (type~\ref{item:good2}) or many red copies of $K_{s}$. In the next step, if there are not enough copies of $H$ of types~\ref{item:good1} or~\ref{item:good2}, then we find many pairs of copies of $K_{s}$, with one copy in $G[A]$ and the other in $G[B]$, having many edges between them. We then invoke the following classical result due to K\H{o}vari, S\'os, and Tur\'an~\cite{KST}.

\begin{theorem}[K\H{o}vari, S\'os, and Tur\'an \cite{KST}] \label{thm:KST} Let $t_1\le t_2$ be positive integers. Then
    \[\operatorname{ex}(n,K_{t_1,t_2}) = O\big(n^{2-1/t_1}\big).\]
\end{theorem}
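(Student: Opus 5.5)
We follow the classical double-counting argument of K\H{o}vari, S\'os, and Tur\'an. Fix $t_1\le t_2$ and let $G$ be a graph on $n$ vertices with no copy of $K_{t_1,t_2}$ and $e=e(G)$ edges. We count \emph{stars} $(v,S)$, where $v\in V(G)$ and $S$ is a $t_1$-subset of the neighbourhood $N(v)$. Counting by the centre $v$, the number of stars is $\sum_{v}\binom{d(v)}{t_1}$. Counting by the leaf set $S$: every $t_1$-set $S\subset V(G)$ has at most $t_2-1$ common neighbours, since otherwise $S$ together with $t_2$ of its common neighbours spans a copy of $K_{t_1,t_2}$. Hence
\[
\sum_{v\in V(G)}\binom{d(v)}{t_1}\le (t_2-1)\binom{n}{t_1}\le (t_2-1)\frac{n^{t_1}}{t_1!}.
\]

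Next we lower-bound the left-hand side by convexity. Define $f(x)=\binom{x}{t_1}=x(x-1)\cdots(x-t_1+1)/t_1!$ for $x\ge t_1-1$ and $f(x)=0$ otherwise. This function is convex on $[0,\infty)$ and agrees with the binomial coefficient at every nonnegative integer. By Jensen's inequality, with average degree $\bar d=2e/n$,
\[
n\binom{\bar d}{t_1}\le \sum_v \binom{d(v)}{t_1}.
\]

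To finish, we may assume $\bar d\ge t_1$, since otherwise $e< t_1n/2$ and there is nothing to prove. Using $\binom{\bar d}{t_1}\ge (\bar d-t_1+1)^{t_1}/t_1!$ and combining with the first display gives
\[
n\,(\bar d-t_1+1)^{t_1}\le (t_2-1)\,n^{t_1}.
\]
Hence $\bar d\le (t_2-1)^{1/t_1}n^{1-1/t_1}+t_1$, and therefore
\[
e\le \tfrac12 (t_2-1)^{1/t_1}n^{2-1/t_1}+\tfrac{t_1}{2}n.
\]
Since $t_1\ge 1$, we have $n^{2-1/t_1}\ge n$, so the linear term is absorbed and $\operatorname{ex}(n,K_{t_1,t_2})=O(n^{2-1/t_1})$.

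The only delicate point is the convexity step. We must use the convex extension of $\binom{x}{t_1}$ above rather than the raw polynomial $x(x-1)\cdots(x-t_1+1)/t_1!$, which is not convex on all of $[0,\infty)$ and could make Jensen's inequality fail for vertices of small degree. Everything else is routine.
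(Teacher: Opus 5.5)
The paper does not prove this statement. It quotes it as a black box from K\H{o}vari, S\'os, and Tur\'an and uses it only in the proof of \Cref{lem:ABsuper}, to choose $s$ with $s^2>4\operatorname{ex}(2s,K_{k,k})$; for that, $\operatorname{ex}(n,K_{k,k})=o(n^2)$ already suffices, and your explicit bound gives it. Your proposal is the standard double-counting proof of the theorem, and it is correct.

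The one fact you assert without justification also holds: the convexity of the truncated $\binom{x}{t_1}$. By Rolle's theorem, all roots of $f'$ and $f''$ lie in $(0,t_1-1)$. Hence $f$ is increasing and convex on $[t_1-1,\infty)$, and it meets the zero piece with nonnegative slope, so Jensen's inequality applies.

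A minor point: your side remark that the raw polynomial is not convex on $[0,\infty)$ is true only for $t_1\ge 3$. This does not affect the argument.
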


This allows us to find a complete bipartite graph between two copies of $K_s$, one in each part. We can then use this complete bipartite graph to find an $(A,B)$-good copy of $H$ of type~\ref{item:good1} or~\ref{item:good2}.

With the above analysis in place, we are now ready to prove \Cref{lem:ABsuper}.

\begin{proof}[Proof of \Cref{lem:ABsuper}]
Let $s$ be an integer such that
\begin{equation}\label{eq:BoundOnS}
2s - k \ge 3\quad\text{and}\quad s^2 > 4\operatorname{ex}(2s,K_{k,k}).
\end{equation}
This is possible by \Cref{thm:KST} and since $s^2 \gg (2s)^{2-1/k}$. Then let $R=R(k,s)$ be the Ramsey number of $K_k$ and $K_s$ and choose $\eta$ satisfying
\[0<\eta < \frac{1}{4(2R)^{2R}}.\]

Given a fixed balanced partition $V(K_n)=A \cup B$ and a $2$-colouring $\chi$ of the edges of $G$, our goal is to find $\eta n^{k}$ $(A,B)$-good copies of $H$ in $\chi$. Suppose that there are fewer than $\eta n^{k}$ red copies of $H$ in $G[A]$ and that there are fewer than $\eta n^{k}$ blue copies of $H$ in $G[B]$; otherwise, we are done. In this setting, we have the following claim. 

\begin{claim}\label{claim:Kscopies}
$G[A]$ contains at least $\eta^{1/2} n^s$ blue copies of $K_s$ and $G[B]$ contains at least $\eta^{1/2} n^s$ red copies of $K_s$.
\end{claim}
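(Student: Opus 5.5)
By symmetry it suffices to treat $G[A]$; the statement for $G[B]$ follows by exchanging the roles of the two colours and of $A$ and $B$. The plan has three steps: find many copies of $K_R$ in $G[A]$, show that almost all of them contain a blue $K_s$, and then double count pairs (blue $K_s$, $R$-clique containing it).

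\textbf{Step 1: many $R$-cliques in $G[A]$.} Since $|A|=n/2$ and $G$ misses at most $\eta\binom{n}{2}$ edges, we have
\[
e(\overline{G[A]})\le \eta n^2 \le 4\eta\binom{|A|}{2}(1+o(1)).
\]
So $G[A]$ has edge density at least $1-5\eta$. I will apply \Cref{thm:lovasz-simonovits} on $|A|=n/2$ vertices with $t=2R$. This is legitimate because $1/t=1/(2R)$ is far larger than $5\eta$, given $\eta<1/(4(2R)^{2R})$. It yields at least
\[
\binom{2R}{R}\Big(\frac{n}{4R}\Big)^R \ge \Big(\frac{n}{4R}\Big)^R
\]
copies of $K_R$ in $G[A]$. (If $2R<R$ were an issue we could take $t$ larger; $t=2R\ge R$ is fine.)

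\textbf{Step 2: few $R$-cliques contain a red $K_k$.} Each $R$-clique in $G[A]$ contains either a red $K_k$ or a blue $K_s$, by the definition of $R=R(k,s)$. A red $K_k$ in $A$ contains a red copy of $H$, since $v(H)=k$. By assumption there are fewer than $\eta n^k$ red copies of $H$ in $G[A]$. Each red copy of $H$ lies in at most $n^{R-k}$ cliques of size $R$. Hence the number of $R$-cliques of $G[A]$ containing a red $K_k$ is less than $\eta n^R$. Because $\eta<(4R)^{-R}/2$, at least $\tfrac12 (n/(4R))^R$ of the $R$-cliques in $G[A]$ contain a blue $K_s$.

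\textbf{Step 3: double counting.} Each blue $K_s$ lies in at most $n^{R-s}$ cliques of size $R$. Therefore the number of blue copies of $K_s$ in $G[A]$ is at least
\[
\tfrac12 (4R)^{-R} n^{s}.
\]

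It remains to compare this constant with $\eta^{1/2}$, and this is the step where the constant bookkeeping could fail. From the choice of $\eta$,
\[
\eta^{1/2}<\tfrac12(2R)^{-R}.
\]
However, the bound obtained in Step 3 has $(4R)^{-R}$ rather than $(2R)^{-R}$, so this crude comparison does not close. I would first tighten the Lov\'asz--Simonovits count, keeping the factor $\binom{2R}{R}\ge 2^R$, which gives $(n/(4R))^R\binom{2R}{R}\ge (n/(2R))^R$. With that, the bound is at least $\tfrac12(2R)^{-R}n^s>\eta^{1/2}n^s$, as required. If a factor is still lost (for instance in counting labelled versus unlabelled copies), the fallback is to shrink $\eta$ further, which the statement allows since $\eta$ only needs to be small in terms of $R$.
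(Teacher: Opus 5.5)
Your argument is correct and is essentially the paper's proof. You get the same $(2R)^{-R}n^R$ lower bound on copies of $K_R$ in $G[A]$ from Lov\'asz--Simonovits: you take $t=2R$ and use $\binom{2R}{R}\ge 2^R$, while the paper takes $t=1/(10\eta)$ and uses $\binom{t}{R}\ge (t/R)^R$. You then remove the fewer than $\eta n^R$ cliques containing a red $K_k$ and divide by $n^{R-s}$, which the paper phrases as a contradiction. Once you keep the $\binom{2R}{R}$ factor the constants close without shrinking $\eta$. The only slip is that $e(\overline{G}[A])\le \eta n^2$ should read $\eta\binom{n}{2}$ to give density $1-5\eta$; this is harmless, since any $O(\eta)$ loss is far below $1/(2R)$.
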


\begin{proof} It suffices to show the first statement, as the other follows from symmetry. 
Notice that
\[e(G[A]) \ge (1/2 - 5\eta)|A|^2,\]
since $e(\overline{G}[A]) \le e(\overline{G}) \le \eta n^2$, $|A|=n/2$ and $n$ is sufficiently large.

Our choice of $\eta$ allows us to apply \Cref{thm:lovasz-simonovits} with $t = 1/(10\eta) \ge R$. This shows us that there are at least 
\[\binom{t}{R}\bigg(\frac{n}{2t}\bigg)^R \ge \bigg(\frac{t}{R}\bigg)^R\bigg(\frac{n}{2t}\bigg)^R = (2R)^{-R}n^R\]
copies of $K_R$ in $G[A]$. By the choice of $R$, each of these copies of $K_R$ contains either a red copy of $K_k$ or a blue copy of $K_s$. Recall that $G[A]$ contains at most $\eta n^{k}$ red copies of $H$. Hence, $G[A]$ contains at most $\eta n^{k}$ red copies of $K_k$. Suppose, for the sake of contradiction, that $G[A]$ also contains at most $\eta^{1/2} n^s$ blue copies of $K_s$. Then, a counting argument implies
\[(2R)^{-R} n^R \le\eta n^k\binom{n/2-k}{R-k} + \eta^{1/2} n^s\binom{n/2-s}{R-s} \le \eta n^kn^{R-k} + \eta^{1/2} n^sn^{R-s} < 2\eta^{1/2} n^R,\]
which is false by our choice of $\eta$.
\end{proof}

The goal now is to use these monochromatic copies of $K_s$ in each part to extract many $(A,B)$-good copies of $H$ of types~\ref{item:good1} or~
\ref{item:good2}. Let $\cA \subset 2^{V(A)}$ denote the family of vertex sets of blue copies of $K_s$ in $G[A]$, and let $\cB \subset 2^{V(B)}$ denote the family of vertex sets of red copies of $K_s$ in $G[B]$. In what follows we will find many pairs $(U,W)$ with $U \in \cA$ and $W \in \cB$ with many edges inside $G[U,W]$ in order to apply \Cref{thm:KST} and find a monochromatic copy of $K_{k,k} \subset G[U,W]$.

\begin{claim}\label{claim:UW}
There is a colour $c$ for which there are at least $\eta n^{2s}/4$ pairs $(U,W)$ with $U \in \cA$ and $W \in \cB$ such that $G[U,W]$ contains a monochromatic $K_{k,k}$ in colour $c$.  
\end{claim}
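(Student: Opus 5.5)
Let $\mathcal{P}$ be the set of pairs $(U,W)$ with $U\in\cA$ and $W\in\cB$. By \Cref{claim:Kscopies}, counting copies of $K_s$ up to the $s!$ labellings (or simply absorbing constants, since $\eta^{1/2}\gg\eta$), we have $|\mathcal{P}|\ge \eta n^{2s}$ with room to spare. The plan is to show that all but a negligible fraction of these pairs have $G[U,W]$ dense enough to force a $K_{k,k}$, and then to pass to a monochromatic $K_{k,k}$ by pigeonholing on colours.

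\emph{Step 1: few pairs miss many edges.} A pair $(U,W)$ is \emph{bad} if $G[U,W]$ misses at least $s^2/2$ of the $s^2$ cross edges. Each non-edge $xy$ of $G$ with $x\in A$, $y\in B$ lies in at most $n^{2s-2}$ pairs (ordered vertex sets containing $x$ and $y$). Also $e(\overline{G})\le\eta\binom n2\le \eta n^2/2$. Double counting gives
\[
\#\{\text{bad pairs}\}\cdot \frac{s^2}{2}\le \frac{\eta n^2}{2}\,n^{2s-2},
\]
so there are at most $\eta n^{2s}/s^2$ bad pairs. This is at most half of $|\mathcal{P}|$, using $|\mathcal{P}|\ge \eta n^{2s}$ and $s$ large. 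In fact, since $|\mathcal{P}|\ge\eta n^{2s}$ is only comparable to the error, I will use the stronger count $|\mathcal{P}|\ge \eta^{1/2}n^{2s}$ or so from \Cref{claim:Kscopies}, which comfortably dominates $\eta n^{2s}$. So at least $\eta n^{2s}/2$ pairs are good.

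\emph{Step 2: each good pair yields a monochromatic $K_{k,k}$.} For a good pair, $G[U,W]$ is a bipartite graph with at least $s^2/2$ edges. The larger colour class of these cross edges has at least $s^2/4$ edges. View it as a graph on the $2s$ vertices $U\cup W$. By \eqref{eq:BoundOnS}, $s^2/4>\operatorname{ex}(2s,K_{k,k})$, so this colour class contains a $K_{k,k}$. That copy is monochromatic, but I must check it lies across the bipartition. It does, because the colour class consists only of cross edges and is therefore bipartite with parts $U,W$. Any $K_{k,k}$ in a bipartite graph has its two sides in opposite parts: $K_{k,k}$ is connected and bipartite, so its bipartition is forced.

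\emph{Step 3: pigeonhole on colour.} Each of the at least $\eta n^{2s}/2$ good pairs receives a colour $c\in\{b,r\}$ for which $G[U,W]$ has a monochromatic $K_{k,k}$. Hence some colour is assigned to at least $\eta n^{2s}/4$ pairs, which proves the claim.

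The main obstacle is the bookkeeping in Step 1: comparing the number of pairs from \Cref{claim:Kscopies} (order $\eta^{1/2}$, which is much larger than $\eta$) with the bad-pair bound (order $\eta/s^2$). The choice $\eta<1/(4(2R)^{2R})$ makes $\eta^{1/2}\ge 2\eta$, and I expect this is the intended slack.
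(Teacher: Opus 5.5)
Your proof is correct and is essentially the paper's argument: double count the non-edges between $A$ and $B$ to show that most pairs in $\cA\times\cB$ have at least $s^2/2$ cross edges, apply \eqref{eq:BoundOnS} to the majority colour class, and pigeonhole over the two colours. One side remark needs fixing: \Cref{claim:Kscopies} gives only $|\cA\times\cB|\ge\eta^{1/2}n^s\cdot\eta^{1/2}n^s=\eta n^{2s}$, not $\eta^{1/2}n^{2s}$, but that is all you need, because your bad-pair bound $\eta n^{2s}/s^2\le\eta n^{2s}/4$ (as $s\ge 2$) already leaves at least $3\eta n^{2s}/4$ good pairs --- the slack comes from the factor $1/s^2$ (the paper gets $4^{-s+1}$ via $\binom{n/2-1}{s-1}^2\le(n/2)^{2s-2}$), not from $\eta^{1/2}\ge 2\eta$.
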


\begin{proof}
It suffices to find at least $\eta n^{2s}/2$ pairs $(U,W)$ with $U \in \cA$ and $W \in \cB$ such that $G[U,W]$ contains a monochromatic $K_{k,k}$. Therefore, by our choice of $s$ satisfying \eqref{eq:BoundOnS}, it suffices to find at least $\eta n^{2s}/2$ pairs $(U,W)$ with $U \in \cA$ and $W \in \cB$ such that $e(G[U,W]) \ge s^2/2$. 

If this is not the case, \Cref{claim:Kscopies} implies that there are at least $\eta n^{2s}$ pairs in $\cA \times \cB$, and thus we have at least $\eta n^{2s}/2$ pairs $(U,W)$ such that $G[U,W]$ contains at least $s^{2}/2$ non-edges. Thus, a double counting shows that
\[\frac{s^2}{2}\frac{\eta n^{2s}}{2} \le e(\overline{G}[A,B])\binom{n/2-1}{s-1}^2 \le e(\overline{G})\binom{n/2-1}{s-1}^2\le \eta n^2\Big(\frac{n}{2}\Big)^{2s-2}=4^{-s+1}\eta n^{2s},\]
where the last inequality is true since $e(\overline{G}[A,B])\le e(\overline{G})\le \eta n^2$. This is a contradiction, since $s^2 > 4^{-s+2}$ for $s\ge 2$.
\end{proof}

Let $c$ be the colour obtained in \Cref{claim:UW}. Suppose $c$ is red. Consider all pairs $(U,W) \in \cA \times \cB$ that contain some copy of $K_{k,k}$ in red. Each of these pairs $(U,W)$ yields an $(A,B)$-good copy of $H$ of type~\ref{item:good1}. Indeed, if $G[U',W']$ is a red copy of $K_{k,k}$ contained in $G[U,W]$ with $U'\subseteq U$ and $W'\subseteq W$, then $G[U'\cup W']$ contains a red copy of $H$ where $V(H)\cap A \subset U'$ is an independent set in $H$ of size $\alpha$.

Therefore, by \Cref{claim:UW} and the observation above, at least $\eta n^{2s}/4$ pairs $(U,W)\in \cA \times \cB$ contain an $(A,B)$-good copy of $H$ of type~\ref{item:good1}. 
On the other hand, each copy of $H$ is contained in at most $\binom{n-k}{2s-k}$ pairs $(U,W) \in \cA \times \cB$. Therefore, assuming that the number of $(A,B)$-good copies of $H$ of type~\ref{item:good1} is at most $\eta n^k$, since otherwise we are done, a double counting argument shows that
\[\frac{\eta n^{2s}}{4} \le \eta n^k \binom{n-k}{2s-k} \le \frac{\eta n^{2s}}{
(2s-k)!},\]
which is a contradiction, since we chose $s$ satisfying \eqref{eq:BoundOnS}. We can similarly deal with the case where $c$ is blue by considering $(A,B)$-good copies of $H$ of type~\ref{item:good2}.
\end{proof}

\section{Proof of \Cref{lem:ABcontainer}}\label{sec:5}

This section is devoted to the proof of \Cref{lem:ABcontainer}. First let us recall its statement below.

\ABcontainer*

As in most applications of the hypergraph container method, we begin by defining the auxiliary hypergraph to which we will apply the container machinery from~\cite{BMS,ST}. Finally, we construct the containers and complete the proof of \Cref{lem:ABcontainer}.

\subsection{The auxiliary hypergraph}\label{subsec:AH} Given a partition $V(K_n)=A \cup B$, we call a graph $G \subset K_n$ \emph{bad} if there exists a $2$-colouring of the edges of $G$ with no $(A,B)$-good copy of $H$ in $G$. Fix a balanced partition $V(K_n)=A\cup B$ and let $\Gamma$ be the set of all $2$-colourings of $K_n$. Given a graph $F \subset K_n$ and a $2$-colouring $\chi\colon E(K_n) \to \{b,r\}$, let 
\[F_{\chi}\coloneqq\big\{(e,\chi(e)) \colon e \in E(F)\big\} \subset E(K_n)\times\{b,r\}.\]
In particular, any $(A,B)$-good copy of $H$ in $\chi$ can be viewed as a subset of $E(K_n)\times\{b,r\}$.

Let $\ell=e(H)$ and consider the following $\ell$-uniform hypergraph $\cH=\cH(A,B,H)$ encoding all $(A,B)$-good copies of $H$ in $\chi$ for each $\chi \in \Gamma$. Precisely, we define
\[V(\cH)\coloneqq E(K_n)\times\{b,r\},\] and $E(\cH)$ to be the collection of all $(A,B)$-good copies of $H$ of some $2$-colouring of $K_n$, that is
\begin{align*}
E(\cH)\coloneqq \Big\{
L_{\chi}\in\binom{V(\cH)}{\ell} \colon \chi\in \Gamma \text{ and } L\text{ is an }  (A,B)\text{-good copy of } H\text{ in }\chi   
\Big\}.   
\end{align*} 

We note, however, that an arbitrary subset $W \subset V(\cH)$ does not necessarily correspond to a coloured graph: for instance, $W$ might contain both $(e,r)$ and $(e,b)$ for some edge $e$, whereas in a coloured graph each edge can be assigned only a single colour. Moreover, given a bad graph $G \subset K_n$ and a $2$-colouring $\chi$ of $G$ for which $G$ contains no $(A,B)$-good copy of $H$, $G_\chi$ is an independent set in $\cH$. Also note that the number of edges in $\cH$ satisfies $e(\cH)\leq 2^{e(H)}n^{k}$.

Before proceeding, let us provide one further definition. Given a subset $W \subseteq V(\cH)$, we define its \textit{shadow graph} to be the graph $\widetilde{W}$ with
\[V\big(\widetilde{W}\big)=V(K_n)\quad\text{ and }\quad E\big(\widetilde{W}\big)=\big\{e\colon \{(e,b),(e,r)\} \cap W \neq \emptyset\big\}.\]
In other words, since a vertex set $W$ is a set of pairs in $E(K_n) \times \{b,r\}$, its shadow graph $\widetilde{W}$ is just the graph obtained by projecting the vertices of $W$ onto their first coordinates.

The idea now is to apply a hypergraph container theorem to $\cH$, in order to derive \Cref{lem:ABcontainer}.

\subsection{Hypergraph containers}\label{subsec:HC} Now we present the hypergraph container theorem that we will use. Prior to that, we introduce a definition from \cite{BMS}.

\begin{definition}
Let $\mathcal{H}$ be a uniform hypergraph with vertex set $V$, 
let $\mathcal{F}$ be an increasing family of subsets of $V$ 
and let $\varepsilon \in (0,1]$. 
We say that $\mathcal{H}$ is $(\mathcal{F}, \varepsilon)$\emph{-dense} if
\[e(\mathcal{H}[A]) \ge \varepsilon e(\mathcal{H})\]
for every $A \in \mathcal{F}$.
\end{definition}

The following container theorem is Theorem 2.2 from \cite{BMS}.

\begin{theorem}[Balogh, Morris, and Samotij \cite{BMS}]\label{thm:BMS}
For every $\ell \in \mathbb{N}$ and all positive $c$ and $\varepsilon$, there exists a positive constant $\gamma$ such that the following holds. 
Let $\mathcal{H}$ be an $\ell$-uniform hypergraph and let $\mathcal{F} \subseteq 2^{V(\mathcal{H})}$ be an increasing family of sets such that 
$|A| \ge \varepsilon v(\mathcal{H})$ for all $A \in \mathcal{F}$. 
Suppose that $\mathcal{H}$ is $(\mathcal{F}, \varepsilon)$-dense and $\tau \in (0,1)$ is such that, for every $j \in [\ell]$,
\[\Delta_j(\mathcal{H}) \le c\tau^{j-1} \frac{e(\mathcal{H})}{v(\mathcal{H})}.\]
Then there exists a family $\mathcal{S} \subseteq \binom{V(\mathcal{H})}{\le \gamma \tau v(\mathcal{H})}$ and functions $f \colon \mathcal{S} \to \overline{\mathcal{F}}$ and 
$g \colon \mathcal{I}(\mathcal{H}) \to \mathcal{S}$ such that for every $I \in \mathcal{I}(\mathcal{H})$,
\[g(I) \subseteq I \quad \text{and} \quad I \setminus g(I) \subseteq f(g(I)).\]
\end{theorem}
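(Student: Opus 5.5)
The plan is to prove \Cref{thm:BMS} with the algorithmic ``fingerprint'' approach, arguing by induction on the uniformity $\ell$. The inductive step produces from an independent set $I$ a small set $g(I)\subseteq I$, and a set $C$ depending only on $g(I)$ that contains $I\setminus g(I)$ and spans fewer than $\varepsilon e(\cH)$ hyperedges. Because $\cH$ is $(\mathcal{F},\varepsilon)$-dense and $\mathcal{F}$ is increasing, any set spanning fewer than $\varepsilon e(\cH)$ hyperedges lies outside $\mathcal{F}$. So setting $f(g(I))\coloneqq C$ gives $f\colon\mathcal{S}\to\overline{\mathcal{F}}$ as required. The condition $|A|\ge\varepsilon v(\cH)$ for $A\in\mathcal{F}$ is only needed to control the case where the algorithm runs out of vertices rather than hyperedges: a container with fewer than $\varepsilon v(\cH)$ vertices is automatically outside $\mathcal{F}$.

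\emph{Base case $\ell=2$.} Here I would run the Kleitman--Winston algorithm. Fix a canonical ordering and maintain an available set $V'$, initially $V(\cH)$. Repeatedly take a vertex $v$ of maximum degree in $\cH[V']$, breaking ties canonically. If $v\in I$, add $v$ to $S$ and delete $v$ together with its neighbourhood from $V'$; otherwise delete only $v$. Stop as soon as $e(\cH[V'])<\varepsilon e(\cH)$ or $|V'|<\varepsilon v(\cH)$, and output $C=V'$.

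Each time a vertex is placed in $S$, it has at least the average degree of $\cH[V']$, which is at least $\varepsilon e(\cH)/v(\cH)$ while the algorithm is still running. Hence each such step removes $\Omega(\varepsilon)\,e(\cH)/v(\cH)$ vertices. With the hypothesis $\Delta_2(\cH)\le c\tau\,e(\cH)/v(\cH)$, I expect that running in $O(1/\varepsilon)$ phases yields $|S|\le\gamma\tau v(\cH)$. The whole run is determined by $S$, since the decision at each step only asks whether $v\in S$. Thus $C$ is a function of $S$, and $I\setminus S\subseteq C$ because we never delete vertices of $I$ that were not placed in $S$.

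\emph{Inductive step.} For general $\ell$, I would run the analogous process but track an auxiliary $(\ell-1)$-uniform hypergraph $\cH_S$ built from the links of vertices placed in $S$:
\[
\cH_S=\big\{e\setminus\{v\}\colon v\in S,\ e\in E(\cH)\big\}.
\]
Since $I$ is independent in $\cH$, the set $I\setminus S$ is independent in $\cH_S$. A vertex is added to $S$ when it lies in $I$ and has large degree in the current hypergraph. The bounds $\Delta_j(\cH)\le c\tau^{j-1}e(\cH)/v(\cH)$ should ensure that, once $|S|$ reaches about $\tau v(\cH)$, the hypergraph $\cH_S$ has many edges and its own codegrees satisfy the $(\ell-1)$-analogue of the hypothesis with the same $\tau$. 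I would then apply induction to $\cH_S$ and $I\setminus S$ to get a further fingerprint $S'$ and a set $C$ containing $I\setminus(S\cup S')$ that spans few edges of $\cH_S$. The final step is to argue, by averaging over the high-degree vertices of $S$, that such a $C$ also spans fewer than $\varepsilon e(\cH)$ edges of $\cH$, and to set $g(I)=S\cup S'$.

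The main obstacle is this last transfer together with the degree bookkeeping: showing that sparseness in the link hypergraph forces sparseness in $\cH$, and that the codegree conditions pass to $\cH_S$. I would attack it first by restricting to a ``regular'' subfamily of edges, namely those whose $j$-subsets all have near-maximal degree, so that link counts and edge counts are comparable. I expect $\gamma$ to grow like $(c/\varepsilon)^{O(\ell)}$ through the iterations.
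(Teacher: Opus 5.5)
The paper gives no proof of this statement. It is Theorem~2.2 of \cite{BMS}, used as a black box, so the comparison is with the argument there. Your base case is fine, and it needs no phases. For a graph $\Delta_2(\mathcal{H})=1$, so the hypothesis just says $\tau\ge v(\mathcal{H})/(c\,e(\mathcal{H}))$. Each vertex placed in $S$ deletes at least $2\varepsilon e(\mathcal{H})/v(\mathcal{H})$ fresh vertices, so $|S|\le (c/2\varepsilon)\tau v(\mathcal{H})$ directly.

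\textbf{The transfer step fails.} The inductive step breaks exactly at the transfer you flag, and as set up it is false, not merely unproven. A vertex of $S$ has maximum degree in the then-available set $V'_t\supseteq C$. That says nothing about how many of its edges have their other $\ell-1$ vertices inside $C$, which is only determined later. For example, let $\mathcal{H}$ be a disjoint union of a denser piece on $V_1$ and a sparser piece on $V_2$. The whole first phase can stay inside $V_1$ (e.g.\ when $I\cap V_1$ is dense compared to $\tau$). Then $\mathcal{H}_S$ lives in $V_1$, and the container returned by induction has no reason to exclude any vertex of $V_2$. But $V_2$ spans a constant fraction of $e(\mathcal{H})$.

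\textbf{What the link hypergraph actually gives.} If $\mathcal{H}'$ is a link hypergraph with $\Delta_1(\mathcal{H}')\le c'e(\mathcal{H}')/v(\mathcal{H})$, then sparseness in $\mathcal{H}'$ forces loss of vertices. A set spanning fewer than $\varepsilon' e(\mathcal{H}')$ edges of $\mathcal{H}'$ misses at least $(1-\varepsilon')v(\mathcal{H})/c'$ vertices.

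\textbf{The missing idea: shrink and iterate.} This is how \cite{BMS} proceed. A one-round lemma gives a fingerprint of size at most $(\ell-1)\tau v(\mathcal{H})$ and a container with at most $(1-\delta)v(\mathcal{H})$ vertices (or one already outside $\mathcal{F}$). This is then iterated on $\mathcal{H}[C]$ while $C\in\mathcal{F}$.
\begin{itemize}
\item Density gives $e(\mathcal{H}[C])\ge\varepsilon e(\mathcal{H})$, so the codegree hypotheses persist with $c/\varepsilon$ in place of $c$.
\item Members of $\mathcal{F}$ have at least $\varepsilon v(\mathcal{H})$ vertices, so $O(\delta^{-1}\log(1/\varepsilon))$ rounds suffice.
\end{itemize}
That is where the hypothesis $|A|\ge\varepsilon v(\mathcal{H})$ does its real work, not in a marginal ``ran out of vertices'' case. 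You can keep your induction: apply the full statement to $\mathcal{H}'$ with $\mathcal{F}'=\{A\colon e(\mathcal{H}'[A])\ge\varepsilon' e(\mathcal{H}')\}$. Its members automatically have at least $(\varepsilon'/c')v(\mathcal{H})$ vertices. But use its output for shrinking, not for a transfer.

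\textbf{Second gap: codegrees do not pass to $\mathcal{H}_S$.} You claim $\mathcal{H}_S$ inherits the codegree hypotheses with the same $\tau$. Since $e(\mathcal{H}_S)\le |S|\,\Delta_1(\mathcal{H})\le c\tau e(\mathcal{H})$, you would need $\Delta_j(\mathcal{H}_S)\lesssim \tau^{j}e(\mathcal{H})/v(\mathcal{H})$. The hypothesis only gives $\deg_{\mathcal{H}_S}(T)\le \ell\,\Delta_j(\mathcal{H})\le \ell c\,\tau^{j-1}e(\mathcal{H})/v(\mathcal{H})$, which is a factor $1/\tau$ too weak. Since $S\subseteq I$ is adversarial, nothing prevents, say, a vertex most of whose edges meet $S$. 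The same overlap problem means the number of distinct link sets can be far below $\sum_{v\in S}\deg(v)$.

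\textbf{How \cite{BMS} handle it.} They build the lower-uniformity hypergraphs with explicit degree caps: a set $e\setminus\{v\}$ is added only if none of its subsets has already reached a prescribed degree threshold. The codegree hypotheses on $\mathcal{H}$ then show that the caps block only a small fraction of candidates while $|S|$ grows to order $\tau v(\mathcal{H})$. Your proposed restriction to edges whose subsets have near-maximal degree gives lower, not upper, control on degrees, so it does not address this.
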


To apply \Cref{thm:BMS}, we first construct an increasing family $\cF$ whose members are all large and span many hyperedges. We then show that the hypergraph $\cH$ is $(\cF,\varepsilon)$-dense for some $\varepsilon>0$. This relies on a suitable supersaturation result, which in our setting is provided by \Cref{lem:ABsuper}.

\subsection{Constructing the containers} In this subsection, we construct the containers for $2$-coloured graphs with no $(A,B)$-good copies of $H$ and prove \Cref{lem:ABcontainer} via an application of \Cref{thm:BMS}. Recall that we fixed a balanced partition of $V(K_n)=A\cup B$. 

First, let us verify that the hypergraph $\cH$, defined at the beginning of this section, is $(\cF, \varepsilon)$-dense for a suitable family $\cF$ and some~$\varepsilon>0$.

\begin{lemma}\label{lem:dense}
There exists $\varepsilon>0$ such that the following holds. Let $\cF \subset 2^{V(\cH)}$ be defined as
\[\cF \coloneqq \Big\{W \subset V(\cH) \colon e\big(\widetilde{W}\big) \ge \big(1 - 2^{e(H)}\varepsilon\big) \binom{n}{2}\Big\}.\]
Then $\cH$ is $(\cF, \varepsilon)$-dense.
\end{lemma}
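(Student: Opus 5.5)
The approach is to reduce the density claim for $\cH[W]$ to \Cref{lem:ABsuper}, applied to a suitable $2$-coloured subgraph of $\widetilde{W}$. Fix $W \in \cF$. We need $e(\cH[W]) \ge \varepsilon e(\cH)$, and since $e(\cH) \le 2^{e(H)} n^k$ it suffices to find at least $\varepsilon 2^{e(H)} n^k$ hyperedges inside $W$. Let $\eta = \eta(H)$ be the constant from \Cref{lem:ABsuper}. I will choose $\varepsilon$ so that $2^{e(H)}\varepsilon \le \eta/2$, hence $2^{e(H)}\varepsilon<\eta$, and also so that $\varepsilon 2^{e(H)} \le \eta$. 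For instance $\varepsilon = \eta 2^{-e(H)-1}$ works.

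First I pass from $W$ to a genuine $2$-colouring. Let $G = \widetilde{W}$; by the definition of $\cF$ it has $e(G) \ge (1-2^{e(H)}\varepsilon)\binom{n}{2} \ge (1-\eta)\binom{n}{2}$. For each edge $e \in E(G)$, at least one of $(e,r),(e,b)$ lies in $W$. Define $\chi(e)$ to be a colour $c$ with $(e,c) \in W$, choosing arbitrarily (say red) if both lie in $W$, and extend $\chi$ arbitrarily to the non-edges of $G$, so that $\chi \in \Gamma$.

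Next I apply supersaturation. By \Cref{lem:ABsuper}, applied to $G$ with the fixed balanced partition $A \cup B$ and the colouring $\chi|_{E(G)}$, there are at least $\eta n^k$ $(A,B)$-good copies $L$ of $H$ in $G$. For each such copy, $L_\chi \subset \{(e,\chi(e)) : e \in E(G)\} \subset W$ by construction, and $L_\chi \in E(\cH)$ because $L$ is an $(A,B)$-good copy of $H$ in $\chi$. Distinct copies $L$ (as edge sets) give distinct sets $L_\chi$, since the shadow recovers $E(L)$. Hence
\[ e(\cH[W]) \ge \eta n^k \ge \varepsilon 2^{e(H)} n^k \ge \varepsilon\, e(\cH). \]

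One subtlety is whether $\cH$ counts copies by edge set or as labelled embeddings. Either convention loses at most a factor of $|\mathrm{Aut}(H)| \le k!$, which can be absorbed into the choice of $\varepsilon$. Another subtlety is whether \Cref{lem:ABsuper} needs $n$ large; its proof uses this, so the statement holds for $n \ge n_0$. For smaller $n$ the conclusion can be arranged by shrinking $\varepsilon$ further, since $\cF$ then forces $\widetilde{W}$ to be nearly complete and any single good copy suffices, or else one simply restricts to large $n$ as in the application.

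The only genuine step is the choice of the colouring $\chi$ from $W$: one must make sure every good copy found in $\chi$ uses only pairs from $W$, which holds because $\chi(e)$ is picked among the colours present in $W$. Everything else is bookkeeping of constants.
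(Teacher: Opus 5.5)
Your proof is correct and is essentially the paper's argument: take $\eta$ from \Cref{lem:ABsuper}, apply it to the shadow graph $\widetilde{W}$, and compare the resulting $\eta n^k$ good copies with $e(\cH)\le 2^{e(H)}n^k$. The paper takes $\varepsilon=\eta/2^{e(H)}$, so the threshold defining $\cF$ is exactly $(1-\eta)\binom{n}{2}$, and it leaves implicit your explicit choice of $\chi(e)$ among the colours present in $W$, which is exactly what makes each good copy a hyperedge of $\cH[W]$; the paper shares with you one imprecision, namely that if $H$ has $i\ge 1$ isolated vertices, distinct good copies can share an edge set, so the loss is of order $n^{i}$ rather than $|\mathrm{Aut}(H)|$, but since $e(\cH)$ is then only $O(n^{k-i})$, counting copies of $H$ with its isolated vertices removed on both sides recovers the same ratio $\eta/2^{e(H)}$.
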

\begin{proof}
Let $\eta>0$ be given by \Cref{lem:ABsuper} and let $\varepsilon = \eta/2^{e(H)}$. Note that $\cF$ is increasing. By \Cref{lem:ABsuper}, the definition of $\cF$ implies that $\widetilde{W}$ contains at least $2^{e(H)}\varepsilon n^{k}$ $(A,B)$-good copies of $H$. Therefore, we have
\[e(\cH[W])\geq 2^{e(H)}\varepsilon n^{k}\geq
\varepsilon e(\cH),\]
where the last inequality holds since $e(\cH)\leq 2^{e(H)}n^{k}$, as required.
\end{proof}

To apply \Cref{thm:BMS}, we need the following observation on the degrees of $\cH$.

\begin{lemma}\label{lem:degreecondition}
Let $\tau=n^{-1/\max\{m_2(H),1\}}$. For each $1\leq j\leq e(H)$, the maximum $j$-degree of $\cH$ satisfies
\[\Delta_j(\cH)\leq e(H)!\cdot\tau^{j-1} n^{k-2}. \]
\end{lemma}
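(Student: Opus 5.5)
We bound $\Delta_j(\cH)$ by a direct count: fix a $j$-subset $J\subset V(\cH)$, count the hyperedges containing it, and compare with $\tau^{j-1}n^{k-2}$ using the definition of $m_2(H)$.

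\textbf{Reduction to a count of copies.} Every hyperedge is of the form $L_\chi$ with $L$ a copy of $H$, and $L_\chi$ is determined by $L$ together with the colours of its edges. Since an $(A,B)$-good copy is monochromatic, the colour is fixed once a single element of $L_\chi$ is known. If $J$ contains two pairs $(e,c),(e',c')$ with $c\neq c'$, or two pairs with the same edge, then no hyperedge contains $J$. So we may assume that $J$ corresponds to a set $J'$ of $j$ distinct edges of $K_n$ with a common colour. In that case the number of hyperedges containing $J$ is at most the number of copies of $H$ in $K_n$ containing the edge set $J'$.

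\textbf{Counting copies through $J'$.} Let $v$ be the number of vertices spanned by $J'$. Any copy containing $J'$ arises from an embedding $\phi$ of $H$ in which some subgraph $F\subseteq H$ with $e(F)=j$ and $v(F)=v$ (no isolated vertices) is mapped onto $J'$. Choosing which $j$ edges of $H$ map to $J'$, and how, contributes at most $e(H)!$ options; this absorbs all choices, since an ordered injection of $j$ edges of $H$ onto $J'$ together with the endpoint orientation determines $\phi$ on $V(F)$. The remaining $k-v$ vertices are then embedded in at most $n^{k-v}$ ways. Hence the count is at most $e(H)!\,n^{k-v}$, and it suffices to prove
\[n^{k-v}\le \tau^{j-1}n^{k-2},\qquad\text{i.e.}\qquad n^{2-v}\le n^{-(j-1)/\max\{m_2(H),1\}}.\]
The small error from the automorphism and orientation bookkeeping is the only point requiring care. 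It can be avoided by counting injective maps $V(F)\to V(J')$ that are edge-bijections, of which there are at most $j!\cdot 2^{j}$ per choice of $F$. If $e(H)!$ does not cover this, one can enlarge the constant; the statement's constant suggests the authors count the edge bijection only.

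\textbf{The density inequality.} For $j=1$ we have $v=2$ and the inequality is equality. For $j\ge2$ we have $v\ge3$, and $F$ is a subgraph of $H$ on $v$ vertices with $j$ edges. By definition of $m_2$,
\[\frac{j-1}{v-2}\le m_2(H)\le \max\{m_2(H),1\},\]
which rearranges to $(j-1)/\max\{m_2(H),1\}\le v-2$, exactly what is needed.

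One subtlety: $m_2(H)$ is defined over subgraphs on at least $3$ vertices, while $F$ may be disconnected. This causes no problem, since $F$ itself qualifies in the maximum. For $H$ with at most one edge, only $j=1$ occurs.

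The main obstacle is only this bookkeeping of the constant $e(H)!$; the exponent comparison is immediate from the definition of $m_2(H)$.
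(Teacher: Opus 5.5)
Your proposal follows the paper's proof step for step. You discard $j$-sets that no hyperedge can contain, bound the remaining degrees by the number of copies of $H$ through a fixed edge set with $j$ edges on $v$ vertices, bound that by $e(H)!\,n^{k-v}$, and conclude from $(j-1)/(v-2)\le m_2(H)$.

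The one loose end is the constant you flag. Your sentence saying that ``which edges and how'' is covered by $e(H)!$ is not correct as written. An ordered assignment of edges of $H$ to the edges of $J'$ fixes the embedding on $V(F)$ only up to flipping each isolated-edge component of $J'$. The honest count is therefore $\frac{e(H)!}{(e(H)-j)!}\,2^{c}\,n^{k-v}$ embeddings, where $c$ is the number of such components. For example, $H=P_3$ and $j=1$ give $4>2!$ choices. As you guessed, the paper counts only the edge bijection and glosses over the same point. Indeed, when $\mathrm{Aut}(H)$ is trivial an edge lies in $2e(H)(n-2)\cdots(n-k+1)$ copies, not at most $e(H)n^{k-2}$ as the paper writes. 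Since Theorem~\ref{thm:BMS} accepts any constant, replacing $e(H)!$ by $2^{e(H)}e(H)!$ costs nothing downstream.

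If you want the stated constant, the slack is already in your exponent comparison. Put $M=\max\{m_2(H),1\}$ and $g=v-2-(j-1)/M$, so that $n^{k-v}=n^{-g}\tau^{j-1}n^{k-2}$.

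\textbf{Case $c=0$.} The edge assignment determines the vertex map, so your count is exact.

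\textbf{Case $j\ge 2$, $c\ge 1$.} There are two possibilities.
\begin{itemize}
\item $J'$ is a matching. Then $v=2j$ and $c=j\ge 2$, so $g\ge c-1\ge c/2$.
\item Removing the isolated edges leaves a graph with $v''\ge 3$ vertices and $j''$ edges satisfying $(j''-1)/(v''-2)\le m_2(H)$. This gives $g\ge 2c-c/M\ge c$.
\end{itemize}
In both cases $2^c n^{-g}\le (4/n)^{c/2}\le 1$ for $n\ge 4$.

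\textbf{Case $j=1$.} There is no slack here, so count copies rather than embeddings. An edge lies in $2e(H)(n-2)\cdots(n-k+1)/|\mathrm{Aut}(H)|$ copies. This is at most $e(H)!\,n^{k-2}$ for one of two reasons. Either $e(H)\ge 3$, so $2e(H)\le e(H)!$. Or $H$ is $K_2$, $P_3$ or $2K_2$ plus isolated vertices, which has a nontrivial automorphism.
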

\begin{proof}
First, for $j=1$, observe that each edge of $K_n$ lies in at most $e(H) n^{k-2}$ copies of $H$, 
and the same bound applies to $(A,B)$-good copies. Hence, for all $e \in E(K_n)$ we have
\[d_{\cH}(e) \leq e(H) n^{k-2}.\]

Next, for each $2\leq j\leq e(H)$, fix an arbitrary set $U\subset V(\cH)$ with $|U|=j$. 
If $U$ contains both $(e,b)$ and $(e,r)$ for some edge $e$, then no $(A,B)$-good copy of $H$ can contain $U$ and therefore $d_{\cH}(U)=0$. If this is not the case, then $U$ corresponds to a set of edges in $K_n$. 
Let $F$ denote the subgraph of $K_n$ induced by these edges. Observe that there are at most 
\[\frac{e(H)!}{(e(H)-e(F))!} n^{k-v(F)} \le e(H)!\cdot n^{k-v(F)}\] $(A,B)$-good copies of $H$ containing $F$. Thus, by the definition of $m_2(H)$, we conclude that
\[k-v(F)=-\bigg(\frac{v(F)-2}{j-1}\bigg)(j-1)+k-2\le-\frac{(j-1)}{\max\{m_2(H),1\}}+k-2\]
which implies
\begin{equation*}
    d_{\cH}(U) \leq e(H)!\cdot n^{k-v(F)} \leq e(H)!\cdot\tau^{j-1} n^{k-2}
\end{equation*}
for every set $U\subset V(\cH)$ of size $j$, as claimed.
\end{proof}

We are now ready to prove \Cref{lem:ABcontainer}. Throughout the proof, any subset of $V(\cH)$ will be viewed as a $[2]$-coloured graph. Moreover, for a $[2]$-coloured graph $S$, we will refer to $e(\widetilde{S})$ simply as $e(S)$.

\begin{proof}[Proof of \Cref{lem:ABcontainer}]
Recall that $k=v(H)$, and let $\ell=e(H)$ and $c=(2k)^{k}\ell!$. By \Cref{lem:degreecondition}, for each $1\leq j\leq e(H)$, the maximum $j$-degree of $\cH$ satisfies
\[\Delta_j(\cH)\leq \ell!\cdot \tau^{j-1} n^{k-2}\leq c  \tau ^{j-1}\frac{e(\cH)}{v(\cH)},\]
where the last inequality is true since
\[e(\cH)\geq \binom{n/2}{k}\ge \bigg(\frac{n}{2k}\bigg)^{k}\quad\text{and}\quad v(\cH)=2\binom{n}{2}\leq n^2.\]

Let $\eta$ be given by \Cref{lem:ABsuper}, $\cF$ and $\varepsilon$ be given by \Cref{lem:dense} and recall that $\varepsilon=\eta/2^{e(H)}$. We already know that $\cF$ is increasing and \Cref{lem:dense} says that $\cH$ is $(\cF, \varepsilon)$-dense. It remains to verify that $|A| \ge \varepsilon v(\cH)$ for all $A \in \cF$. This follows from
\[|A| \ge e(A) \ge (1 - 2^{e(H)}\varepsilon)\binom{n}{2} \ge \varepsilon v(\cH),\]
where the last inequality is true since $v(\cH) \le n^2$, $\varepsilon=\eta/2^{e(H)}$ and $\eta<1/4$.

Therefore, we can apply \Cref{thm:BMS} to $\cH$, which ensures the existence of a constant $\gamma>0$, a collection $\mathcal{S} \subseteq \binom{V(\mathcal{H})}{\le \gamma\tau\, v(\mathcal{H})}$, and functions
$f\colon \cS \to \overline{\cF}$ and $g \colon \cI(\cH) \to \cS$ such that for every $I \in \cI(\cH)$,
\begin{equation}\label{eq:contIf}
g(I) \subseteq I 
\quad \text{and} \quad 
I \setminus g(I) \subseteq f(g(I)).
\end{equation}

Let
\[\cG\coloneqq \{S \cup f(S) : S \in \cS\},\]
and define $h: 2^{V(\cH)} \to \cG$ by
\[h(S) = S \cup f(S)\]
if $S \in \cS$. For $S \not\in \cS$ define $h(S)$ arbitrarily.

Every $2$-coloured graph $G$ on $n$ vertices with no $(A,B)$-good copies of $H$ can be viewed as an independent set $I_G \in \cI(\cH)$. Let $S = g(I_G)$ and note that \eqref{eq:contIf} implies
\begin{equation}\label{eq:contIS}
    S \subseteq I_G \subseteq S \cup f(S)=h(S).
\end{equation}
Now let $C=\gamma$ and observe that
\begin{equation}\label{eq:sizeofS}
e(S) \le |S| \le \gamma\tau v(\cH) \le C n^{2-1/\max\{m_2(H),1\}},
\end{equation}
which, together with \eqref{eq:contIS}, proves \cref{item:ABcontainer2} in \Cref{lem:ABcontainer}.

Let $\delta = \eta/2$. It remains to prove that for each $X\in\cG$ we have $e\big(X\big)\leq (1-\delta)\binom{n}{2}$. Let $S \in \cS$ and set $X=S \cup f(S)$. By \eqref{eq:sizeofS}, $S$ satisfies
\[e(S)\le |S| \leq Cn^{2-1/\max\{m_2(H),1\}}<\delta \binom{n}{2},\]
where the last inequality holds for sufficiently large $n$. Finally, note that
\[e(X) \le e\big(S\big) + e\big(f(S)\big) \leq \delta\binom{n}{2} + (1-2\delta)\binom{n}{2} = (1-\delta)\binom{n}{2},\]
since $f(S) \in \overline{\cF}$, $\eta = \varepsilon2^{e(H)}$ and $\delta=\eta/2$. This proves \cref{item:ABcontainer1} in \Cref{lem:ABcontainer}.
\end{proof}

\section*{Acknowledgements}
This project was initiated during the $2^a$ Escola Brasileira de Combinatória held at IMPA, Rio de Janeiro. We are very grateful to Rob Morris, Hong Liu, and Ayush Basu for their helpful discussions and suggestions on the presentation of this paper.

\bigskip
\bibliographystyle{abbrv}
\def\bibfont{\footnotesize}
\bibliography{refs}
\end{document}